\newtheorem{theorem}{Theorem}[section]
\newtheorem{lemma}[theorem]{Lemma}
\newtheorem{proposition}[theorem]{Proposition}
\newtheorem{corollary}[theorem]{Corollary}
\newcommand{\thistheoremname}{}
\newtheorem*{genericthm*}{\thistheoremname}
\newenvironment{namedthm*}[1]
  {\renewcommand{\thistheoremname}{#1}%
   \begin{genericthm*}}
  {\end{genericthm*}}
\theoremstyle{definition}
\newtheorem{definition}[theorem]{Definition}
\newtheorem{example}[theorem]{Example}
\newtheorem{rem}[theorem]{Remark}
\newtheorem{problem}[theorem]{Problem}
\newtheorem{construction}[theorem]{Construction}
\newtheorem{remark}[theorem]{Remark}
\theoremstyle{remark}
\newcommand{\In}{\mathrm{In}}
\newcommand\CC{{\mathbb C}}
\newcommand\KK{{\mathbb K}}
\newcommand\NN{{\mathbb N}}
\newcommand\ZZ{{\mathbb Z}}
\newcommand\RR{{\mathbb R}}
\newcommand\QQ{{\mathbb Q}}
\newcommand\PP{{\mathbb P}}
\newcommand\CO{{\mathcal O}}
\newcommand\A{{\mathbb A}}
\newcommand\F{{\mathcal F}}
\DeclareMathOperator{\Gr}{Gr}
\DeclareMathOperator{\Sym}{Sym}
\DeclareMathOperator{\Char}{char}
\newcommand\D{{\mathcal D}}
\DeclareMathOperator{\Proj}{Proj}
\DeclareMathOperator{\Spec}{Spec}
\DeclareMathOperator{\trop}{Trop}
\DeclareMathOperator{\cone}{cone}
\DeclareMathOperator{\lcm}{lcm}
\DeclareMathOperator{\codim}{codim}
\DeclareMathOperator{\vol}{vol}
\DeclareMathOperator{\ord}{ord}
\DeclareMathOperator{\WDIV}{Div}
\newcommand\bangle[1]{\langle #1 \rangle}
\numberwithin{equation}{section}
\title{Khovanskii-Finite Valuations, Rational Curves, and Torus Actions}
\author{Nathan Ilten}
\address{Department of Mathematics, 8888 University Drive, Simon Fraser University, Burnaby BC V5A1S6}
\email{nilten@sfu.ca}
\author{Milena Wrobel}
\address{Department of Mathematics, 8888 University Drive, Simon Fraser University, Burnaby BC V5A1S6}
\email{milena\_wrobel@sfu.ca}
\begin{document}
\maketitle

\begin{abstract}
We study full rank homogeneous valuations on (multi)-graded domains and ask when they have finite Khovanskii bases. We show that there is a natural reduction from multigraded to simply graded domains. As special cases, we consider projective coordinate rings of rational curves, and almost toric varieties. Our results relate to several problems posed by Kaveh and Manon, and imply that the procedure of Bossinger-Lamboglia-Mincheva-Mohammadi for producing tropical prime cones will not terminate in general.
\end{abstract}

\section{Introduction}
\subsection{Setting and Motivation}
Let $\KK$ be an algebraically closed field, and $X=\Spec R$ an irreducible affine variety over $\KK$. 
We will be studying valuations $\nu$ of $R$ which are trivial on $\KK$, see \S\ref{sec:prelim}. The image of $R$ under such a valuation forms a semigroup $S(R,\nu)$ known as the \emph{value semigroup}. We say that a valuation $\nu$ is \emph{Khovanskii-finite} (with respect to $R$) if $S(R,\nu)$ is finitely generated as a semigroup.\footnote{This differs slightly from terminology of \cite{kaveh-manon}, see our Remark \ref{rem:kf}.}

It is an important question as to when $\nu$ is Khovanskii-finite. For example, when $\nu$ is Khovanskii-finite and has full rank, the variety $X$ admits a flat degeneration to the toric variety associated to $S(R,\nu)$, see e.g.~\cite[Proposition 5.1]{anderson}. This may be used to obtain valuable geometric information concerning $X$. Taking a slightly different point of view, for fixed $R$ it is interesting to try to produce Khovanskii-finite valuations $\nu$.

When $X$ admits an action by an algebraic torus $T$, or equivalently, $R$ is graded by the character lattice $M$ of $T$, it is natural to consider only \emph{homogeneous} valuations, that is, valuations which are completely determined by their values on homogeneous elements of $R$, see Definition \ref{defn:homog}. 
For example, when $X$ is the affine cone over an embedded projective variety $Y$, homogeneous valuations deliver information about $Y$ together with its polarization.

When we are in the situation that $R$ is $M$-graded, we will say that $R$ (or equivalently $X$) is homogeneously Khovanskii-finite if every full rank homogeneous valuation $\nu$ is Khovanskii-finite. An affine (not-necessarily-normal) toric variety $X$ is homogeneously Khovanskii-finite for essentially trivial reasons, see Example \ref{ex:toric}. N.~Ilten and C.~Manon have shown that when $X$ is a normal rational variety with a faithful action by a codimension-one torus, it is also homogeneously Khovanskii-finite \cite[Theorem 5.4]{poised}.

\subsection{Results}
We now highlight our main results. We are interested in criteria when a valuation is Khovanskii-finite, or similarly, when an $M$-graded domain $R$ is homogeneously Khovanskii-finite. Our first main result shows that when considering Khovanskii-finiteness for homogeneous valuations, it is always possible to reduce to the $\ZZ$-graded case:
\begin{theorem}[See Theorem \ref{thm:khofin}]\label{thm:main1}
Let $\nu$ be a full rank homogeneous valuation of an $M$-graded domain $R$.
Assume that the closure $C$ of the cone generated by $S(R,\nu)$ is polyhedral, and let $\pi$ denote the natural projection from $C$ to $M_\RR$.
Then $\nu$ is Khovanskii-finite with respect to $R$ if and only if for 
every ray $\rho\prec C$,
 $\nu$ is Khovanskii-finite with respect to the $\ZZ$-graded domain 
\[
	R_{\pi(\rho)}=\bigoplus_{u\in M\cap \pi(\rho)} R_{u}
\]
and $\pi^{-1}(\pi(\rho))$ is the closure of the cone generated by $S(R_{\pi(\rho)},\nu)$.
\end{theorem}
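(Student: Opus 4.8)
The plan is to convert the statement into a combinatorial one about the value semigroup $S := S(R,\nu)$ and then prove the two implications, the backward one carrying the real content. By definition, Khovanskii-finiteness of $\nu$ with respect to any graded domain means finite generation of the corresponding value semigroup. Since $\nu$ is homogeneous, the value group $\Gamma := \langle S\rangle$ carries a homomorphism $p\colon\Gamma\to M$ recording degrees, with $\pi = p\otimes\RR$; for homogeneous $f\in R_u$ one has $p(\nu(f))=u$, and for $f=\sum_u f_u$ homogeneity gives $\nu(f)=\min_u\nu(f_u)$. I would use these two facts to prove the slice identity
\[
  S(R_{\pi(\rho)},\nu)=S\cap\pi^{-1}(\pi(\rho))
\]
together with the corresponding identification of cones. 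The theorem then becomes the following assertion about the subsemigroup $S$ of $\Gamma$, whose cone $C$ is polyhedral: $S$ is finitely generated if and only if for every ray $\rho\prec C$ the slice $S_\rho:=S\cap\pi^{-1}(\pi(\rho))$ is finitely generated and $\overline{\cone(S_\rho)}=C\cap\pi^{-1}(\pi(\rho))$.

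For the forward direction I assume $S$ finitely generated. Each slice $S_\rho=S\cap P_\rho$ is the intersection of $S$ with the rational polyhedral cone $P_\rho:=C\cap\pi^{-1}(\pi(\rho))$, hence finitely generated by the standard fact that such intersections of affine semigroups are again finitely generated. The filling equality $\overline{\cone(S_\rho)}=P_\rho$ I would verify ray by ray on the polyhedron $P_\rho$: a ray of $P_\rho$ is either a ray of $C$, so spanned by a generator of $S$ lying on it, or it is cut out by $\pi^{-1}(\pi(\rho))$ from a higher-dimensional face, in which case a suitable nonnegative combination of the generators of $S$ on that face lies in $\pi^{-1}(\pi(\rho))$ and realizes the required direction. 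This direction is comparatively soft.

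The backward direction is the main obstacle: finite generation is a global condition on the interior of $C$ which is not detected on any proper face, and the slabs $\pi^{-1}(\pi(\rho))$ are in general not faces of $C$, so one cannot argue by restriction. My plan is to first extract a finitely generated subsemigroup $S'\subseteq S$ with $\overline{\cone(S')}=C$, using both hypotheses crucially: finite generation of $S_\rho$ lets me pick a finite $G_\rho\subseteq S_\rho$ with $\cone(G_\rho)=\overline{\cone(S_\rho)}$ attained exactly rather than only in the limit, and the filling equality then gives $\cone(G_\rho)=P_\rho$; as every ray of $C$ lies in some $P_\rho$, the finite set $G=\bigcup_\rho G_\rho$ satisfies $\cone(G)=C$, and I set $S'=\langle G\rangle$. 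To upgrade this to finite generation of $S$, I would pass to the normalization $\bar S=\Gamma\cap C$: since $\cone(G)=C$ is full-dimensional, $\langle S'\rangle$ has finite index in $\Gamma$, so $\bar S$ is module-finite over $S'$ and $\KK[\bar S]$ is a Noetherian module over the Noetherian ring $\KK[S']$. As $S'\subseteq S\subseteq\bar S$, the algebra $\KK[S]$ is a $\KK[S']$-submodule of $\KK[\bar S]$, hence a finitely generated module; monomial module generators $\chi^{a_1},\dots,\chi^{a_k}$ then give $S=\bigcup_i(a_i+S')$, so that $S$ is generated by $G\cup\{a_1,\dots,a_k\}$. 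Recognizing that the ray-slice data assembles into a full-dimensional finitely generated $S'$, and then controlling the interior through module-finiteness of the normalization, is the crux of the whole argument.
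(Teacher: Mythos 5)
Your proposal is correct and follows essentially the same route as the paper: the forward direction via the slice identity $S(R_{\pi(\rho)},\nu)=S(R,\nu)\cap\pi^{-1}(\pi(\rho))$ and finite generation of such intersections, and the backward direction by producing elements of $S(R,\nu)$ on every ray of $C$ and then concluding via module-finiteness over the saturation. What you present as the crux of the backward direction is precisely the paper's Lemma \ref{lem:fingen} (a subsemigroup of $\ZZ^n$ is finitely generated iff the cone it generates is rational polyhedral and closed), whose proof you have in effect inlined.
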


An \emph{almost toric variety} is a rational not-necessarily-normal variety with a faithful action by a codimension-one torus, see e.g.~\cite{bolin}. In contrast to the result of Ilten and Manon cited above, such varieties are typically \emph{not} homogeneously Khovanskii-finite:
\begin{theorem}[See Theorem \ref{thm:khofincharacterization}]\label{thm:main2}
Assume that $\KK$ is uncountable and $\Char(\KK)=0$. An affine almost toric variety $X=\Spec R$ with $R_0=\KK$ is homogeneously Khovanskii-finite if and only if there exists some $\lambda\in \NN$ such that the Veronese subring
\[
\bigoplus_{u\in M} R_{\lambda\cdot u}
\]
is normal.
\end{theorem}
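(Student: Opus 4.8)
The plan is to leverage Theorem~\ref{thm:main1} to reduce the study of an almost toric $R$ to its $\ZZ$-graded slices, and then to exploit the codimension-one torus structure together with the Ilten--Manon result on normal rational codimension-one-torus varieties. First I would set up the combinatorial picture: since $X$ is almost toric, $R$ carries an $M$-grading with $M$ of rank one less than $\dim X$, and the $\ZZ$-grading induced by a homogeneous valuation on a ray interacts with the $T$-action in a controlled way. I would analyze a full rank homogeneous valuation $\nu$ ray by ray, applying Theorem~\ref{thm:main1} so that Khovanskii-finiteness of $\nu$ on $R$ becomes equivalent to Khovanskii-finiteness on each $\ZZ$-graded subring $R_{\pi(\rho)}$ together with the cone condition $\pi^{-1}(\pi(\rho)) = \overline{\cone(S(R_{\pi(\rho)},\nu))}$.

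\medskip

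The heart of the argument is to show that the subrings $R_{\pi(\rho)} = \bigoplus_{u \in M \cap \pi(\rho)} R_u$ are again themselves rings with a codimension-one torus action, so the machinery of \cite{poised} applies to them once they are normal. So the next step is to connect the normality hypothesis on a Veronese subring $\bigoplus_{u\in M} R_{\lambda u}$ to the normality of these ray-subrings. Concretely, I expect that $R$ being almost toric forces each $R_{\pi(\rho)}$ to be a $\ZZ$-graded ring of the form $\bigoplus_{n\ge 0} A_n$ where the behaviour of the ring is governed by the normalization data of the rational curve or surface cut out by the ray. I would argue that normality of the $\lambda$-th Veronese corresponds exactly to the value semigroups along all rays being saturated, which is precisely the combinatorial obstruction to Khovanskii-finiteness.

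\medskip

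For the forward direction, assuming no normal Veronese exists, the strategy is to exhibit an explicit full rank homogeneous valuation whose value semigroup fails to be finitely generated. Here I would use the uncountability of $\KK$ and $\Char(\KK)=0$: these hypotheses suggest a parameter count / Baire-category style argument producing a valuation (likely built from a generic choice of a rational parametrization of the curve associated to some ray) for which $\overline{\cone(S(R_{\pi(\rho)},\nu))}$ strictly exceeds $\pi^{-1}(\pi(\rho))$, or for which $S(R_{\pi(\rho)},\nu)$ itself is not finitely generated. The non-normality of every Veronese should translate into the existence of infinitely many ``new'' semigroup generators appearing at unboundedly high degrees. For the converse, given a normal $\lambda$-Veronese, I would pass to that Veronese (which does not affect full-rank homogeneous Khovanskii-finiteness up to a finite-index sublattice of $M$), observe it is a normal rational variety with a codimension-one torus action, and invoke \cite[Theorem 5.4]{poised} to conclude homogeneous Khovanskii-finiteness, checking that the reduction to the Veronese preserves the property via Theorem~\ref{thm:main1}.

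\medskip

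The main obstacle I anticipate is the forward direction: constructing a genuinely non-Khovanskii-finite valuation from mere non-normality of all Veroneses. It is one thing to know the semigroups are not saturated and another to produce a single full rank homogeneous valuation witnessing infinite generation of $S(R,\nu)$; the uncountability hypothesis is almost certainly there precisely to make a generic/transcendence-degree argument work, and pinning down the correct genericity statement — and verifying that the chosen valuation is indeed full rank and homogeneous — is where the real work lies.
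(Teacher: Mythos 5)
Your skeleton matches the paper's in outline: reduce to the $\ZZ$-graded ray subalgebras via Theorem \ref{thm:main1}, use the rational-curve analysis for the ``only if'' direction (this is indeed where uncountability and $\Char(\KK)=0$ enter: a non-smooth rational curve admits only countably many Khovanskii-finite valuations $\nu_Q$, so over an uncountable field some $\nu_Q$ fails), and use normality together with \cite{poised} for the ``if'' direction. The ``if'' direction as you sketch it works, though the paper gets it more directly from Lemma \ref{lem:fingen}: finite generation of $S(R',\nu)$ for the normal Veronese $R'$ makes $\cone(S(R',\nu))=C(R',\nu)=C(R,\nu)$ closed and rational polyhedral, and $S(R',\nu)\subseteq S(R,\nu)$ then forces $\cone(S(R,\nu))$ to be closed and polyhedral too; no appeal to Theorem \ref{thm:main1} is needed there. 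Also, your proposed Baire-category construction of a bad valuation is unnecessary: arguing the contrapositive, homogeneous Khovanskii-finiteness of $R$ passes to every ray subalgebra, and Theorem \ref{thm:genuskhofincurves} already supplies a non-Khovanskii-finite $\nu_Q$ whenever a GIT quotient curve is singular.

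The genuine gap is in the ``only if'' direction, at the step where you pass from information on the rays back to normality of a single Veronese subring of the whole $M$-graded ring. What the ray analysis gives you is that for each ray $\rho$ there is $\lambda_\rho$ with $R_\rho(\lambda_\rho\ZZ)$ normal, i.e., equal to the corresponding Veronese of $\bar R$. Your assertion that ``normality of the $\lambda$-th Veronese corresponds exactly to the value semigroups along all rays being saturated'' is precisely the nontrivial claim: normality of $\bigoplus_{u\in M}R_{\lambda u}$ requires $R_{\lambda u}=\bar R_{\lambda u}$ for \emph{all} $u$ in the weight cone, not just those on its rays. The paper bridges this by writing $\bar R=\bigoplus_u H^0\bigl(\PP^1,\CO(\lfloor\D(u)\rfloor)\bigr)\chi^u$ for a p-divisor $\D$, choosing a \emph{smooth} subdivision $\Sigma$ of $\omega(R)$ on which $\D$ is linear, applying the ray result to every ray of $\Sigma$ (not only rays of $\omega(R)$ --- this is why condition (2) of Theorem \ref{thm:khofin}, covering all rational rays, matters), and then using surjectivity of the multiplication maps $H^0(\PP^1,\CO(D_1))\times H^0(\PP^1,\CO(D_2))\to H^0(\PP^1,\CO(D_1+D_2))$ for semiample integral divisors to propagate the equalities $R_{u_i}=\bar R_{u_i}$ on the rays of a smooth cone $\sigma\in\Sigma$ to $R_u=\bar R_u$ for every $u\in\sigma\cap\lambda M$. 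This gluing step uses specifically that the GIT quotients are $\PP^1$ and that $\D$ is piecewise linear with semiample values; without some version of it your argument does not close.
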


The analysis of Khovanskii-finiteness for almost toric varieties follows from the reduction in Theorem \ref{thm:main1} together with an analysis of Khovanskii-finiteness for affine cones over projective rational curves. In this setting, we obtain the following results:
\begin{theorem}[See Corollary \ref{cor:thm3}]\label{thm:main3}
Let $X$ be the affine cone over a projective integral rational curve $Y$ and assume that $\KK$ is uncountable and $\Char(\KK)=0$. Then $X$ is homogeneously Khovanskii-finite if and only if $Y$ is smooth.
\end{theorem}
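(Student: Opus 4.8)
The plan is to deduce this from Theorem \ref{thm:main2} (the almost toric characterization) together with Theorem \ref{thm:main1}, by recognizing the affine cone $X$ over a projective integral rational curve $Y$ as an almost toric variety and translating the normality-of-a-Veronese condition into smoothness of $Y$. First I would observe that $X = \Spec R$, where $R = \bigoplus_{d \geq 0} H^0(Y, \CO_Y(d))$ is the projective coordinate ring for the given polarization, is naturally $\ZZ$-graded, so $M = \ZZ$. The key point is that a projective \emph{rational} curve admits a dominant map from $\PP^1$, and the function field $\KK(Y) = \KK(t)$ carries the obvious one-dimensional torus action; pulling this back gives $X$ the structure of a rational variety with a faithful action by a codimension-one (here, one-dimensional) torus, i.e.\ an almost toric variety in the sense used for Theorem \ref{thm:main2}. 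Since $R_0 = \KK$ (the curve is integral and projective, so global regular functions are constants), the hypotheses of Theorem \ref{thm:main2} are met, and that theorem reduces homogeneous Khovanskii-finiteness of $X$ to the existence of some $\lambda \in \NN$ making the Veronese subring $R^{(\lambda)} = \bigoplus_{d} R_{\lambda d}$ normal.

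The heart of the argument is then to show that this Veronese condition is equivalent to smoothness of $Y$. For the forward-ish direction, I would use that $R^{(\lambda)}$ is the projective coordinate ring of $Y$ re-embedded by the $\lambda$-th Veronese of the original very ample line bundle; its normalization is the section ring of the normalization $\widetilde Y$. Since $Y$ is a rational curve, $\widetilde Y \cong \PP^1$, so $R^{(\lambda)}$ is normal precisely when $Y$ (with the $\lambda$-th power polarization) agrees with its normalization as a graded ring, which forces $Y$ to be smooth. Conversely, if $Y$ is smooth it is isomorphic to $\PP^1$, and for $\lambda$ sufficiently large the line bundle $\CO_Y(\lambda)$ is projectively normal on $\PP^1$ — indeed every line bundle of positive degree on $\PP^1$ is projectively normal — so $R^{(\lambda)}$ is normal and Theorem \ref{thm:main2} gives homogeneous Khovanskii-finiteness. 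I expect the main subtlety to lie in relating normality of the section ring to smoothness of the curve in a way that is genuinely an equivalence: one must argue that a \emph{singular} integral rational curve cannot have any projectively normal Veronese re-embedding, which amounts to the fact that the seminormalization/normalization of a singular point obstructs normality of every high Veronese.

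Concretely, for the ``only if'' direction I would argue by contrapositive: suppose $Y$ is singular. Its normalization $n\colon \PP^1 \to Y$ is a finite birational morphism that is not an isomorphism, so the conductor ideal is nontrivial, and the inclusion $R^{(\lambda)} \hookrightarrow \widetilde R^{(\lambda)} = \bigoplus_d H^0(\PP^1, n^*\CO_Y(\lambda d))$ is strict in every sufficiently high Veronese degree, since pulling back identifies sections on $Y$ with those sections on $\PP^1$ satisfying the gluing conditions imposed at the preimages of the singular points. Because $\widetilde R^{(\lambda)}$ is the integral closure of $R^{(\lambda)}$ in its fraction field, strictness of this inclusion says exactly that $R^{(\lambda)}$ is not normal, for every $\lambda$. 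This handles the nonexistence of a normal Veronese, and combined with Theorem \ref{thm:main2} completes the equivalence. The hardest step is verifying that the failure of normality genuinely persists for \emph{all} $\lambda$ rather than being cured by passing to a high enough Veronese — I would emphasize that normalization commutes with the Veronese operation on graded rings, so $\widetilde{R^{(\lambda)}} = (\widetilde R)^{(\lambda)}$, and the conductor, being a nonzero ideal, never disappears under this operation, which is precisely what makes smoothness the exact obstruction.
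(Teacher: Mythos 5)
Your derivation is circular relative to the paper's logical structure. You reduce the statement to Theorem \ref{thm:main2} (= Theorem \ref{thm:khofincharacterization}), but the paper's proof of that theorem runs in exactly the opposite direction: the ``only if'' half of Theorem \ref{thm:khofincharacterization} reduces, via Theorem \ref{thm:khofin}, to the Veronese subalgebras $R_\rho$ attached to rays of the weight cone --- which are precisely projective coordinate rings of rational curves --- and then invokes Theorem \ref{thm:genuskhofincurves}, the result from which Corollary \ref{cor:thm3} is immediately deduced. So the hard direction of the statement you are asked to prove (a \emph{singular} integral rational curve admits a full rank homogeneous valuation that is not Khovanskii-finite) is exactly the input to Theorem \ref{thm:main2}, not a consequence of it. Your proposal supplies no independent argument for this direction: the only substantive content you prove is the equivalence ``some Veronese subring $R(\lambda\ZZ)$ is normal $\Leftrightarrow$ $\Proj R$ is normal $\Leftrightarrow$ $Y$ is smooth,'' which is correct (it is the paper's Remark \ref{rem:projVSA}, using that normalization commutes with finite-index Veronese as in Lemma \ref{lem:raygen}), but it is the easy, purely commutative-algebraic half of the story. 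Symptomatically, your argument never engages with where the hypotheses that $\KK$ is uncountable and $\Char(\KK)=0$ enter; they are simply inherited from the black box.

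The paper's actual proof works directly with the curve. Writing $R\cong R(L)$ for $L=R_1\subseteq\Sym^d(\KK^2)$ inside the normalization, every full rank homogeneous valuation is (up to isomorphism of value semigroups) of the form $\nu_Q$ for a point $Q=(\alpha:\beta)\in\PP^1$, and Theorem \ref{thm:curve} shows $\nu_Q$ is Khovanskii-finite iff $(\beta s-\alpha t)^{dk}\in L^k$ for some $k$, i.e.\ iff $L^k$ meets the rational normal curve $C_{dk}$ at the image of $Q$. If $Y$ is smooth ($g=0$) then $L^k=\Sym^{dk}(\KK^2)$ for $k\gg 0$ and every $\nu_Q$ is Khovanskii-finite; if $Y$ is singular ($g\geq 1$) then each $L^k$ is a proper subspace, and since $C_{dk}$ is non-degenerate in characteristic zero it meets $L^k$ in finitely many points, so only countably many $Q$ can work --- uncountability of $\KK$ then produces a bad $Q$. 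To repair your proposal you would need to supply this (or an equivalent) argument rather than cite Theorem \ref{thm:main2}.
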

\begin{theorem}[(See Corollary \ref{cor:thm4}]\label{thm:main4}
	Assume $\Char(\KK)=0$.
Let $X=\Spec R$ be the affine cone over a very general integral rational projective plane curve of degree $d>3$. Then there is no homogeneous valuation $\nu$ which is Khovanskii-finite with respect to $R$. 
\end{theorem}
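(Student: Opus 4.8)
The plan is to exploit the fact that $X=\Spec R$ is the affine cone over the integral projective curve $Y\subset\PP^2$, so $R$ is $\ZZ$-graded with $R_0=\KK$ and $\dim R=2$; in particular $M=\ZZ$, and the reduction of Theorem~\ref{thm:main1} is trivial. Writing $\phi\colon\PP^1=\widetilde{Y}\to Y$ for the normalization and $R\cong\KK[g_0,g_1,g_2]\subseteq\KK[s,t]$ for the induced inclusion (with $g_i=\phi^*x_i$ the degree-$d$ forms parametrizing $Y$), I claim every full rank homogeneous valuation is, up to equivalence, of the form $\nu_p$ for a point $p\in\PP^1$, where $\nu_p(h)=(n,\ord_p\phi^*h)$ on homogeneous $h\in R_n$. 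Indeed, a full rank ($=$ rank $2$) homogeneous valuation on a graded surface decomposes as the $\ZZ$-grading together with a rank-one valuation of the degree-zero subfield $\KK(Y)$ of the fraction field of $R$; since $Y$ is a curve, the latter is a discrete valuation, i.e.\ a point of the smooth model $\PP^1$. I would first record this classification (it is exactly the analysis of cones over rational curves underlying Theorem~\ref{thm:main3}), reducing the statement to showing that for very general $Y$ of degree $d>3$ no $\nu_p$ is Khovanskii-finite.

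Next I would extract a clean necessary condition for Khovanskii-finiteness. Set $\overline{R}_n=\phi^*R_n\subseteq H^0(\PP^1,\CO(nd))$ and $M_n(p)=\max\{\ord_p\psi:\psi\in\overline{R}_n\setminus\{0\}\}$. Since $Y$ is rational of degree $d>3$, a very general such $Y$ has exactly $\delta=\binom{d-1}{2}\ge 3$ nodes (using $\Char(\KK)=0$), and for $n\gg0$ one has $\dim\overline{R}_n=nd+1-\delta$. Because the top of the vanishing sequence of an $(r{+}1)$-dimensional linear series at a point is at least $r$, we get $nd-\delta\le M_n(p)\le nd$ for every $p$, so $\cone(S(R,\nu_p))$ always has top ray of slope exactly $d$. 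As $M_n$ is superadditive, if $S(R,\nu_p)$ were finitely generated then its extreme rays would be spanned by generators, so the top ray would be \emph{attained}: there would exist $m\ge1$ with $M_m(p)=dm$, equivalently $\ell_p^{\,dm}\in\overline{R}_m$, where $\ell_p$ is the linear form vanishing at $p$. Geometrically this says that a degree-$m$ plane curve meets $Y$ only at $\phi(p)$, with the entire intersection cycle equal to $md\cdot[p]$. Thus it suffices to prove that for very general $Y$ of degree $d>3$ there is no pair $(p,m)$ with $\ell_p^{\,dm}\in\overline{R}_m$.

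I would prove this by a dimension count over the parameter space of parametrized rational curves, treating each $m$ separately and then taking a countable union. For fixed $m$, the locus $Z_m=\{Y:\exists\,p,\ \ell_p^{\,dm}\in\overline{R}_m\}$ is closed, being the image under the proper projection from $p\in\PP^1$ of an incidence condition. The subspace $\overline{R}_m\subseteq H^0(\PP^1,\CO(dm))$ has codimension $\delta$ and is cut out by the $\delta$ node-gluing conditions determined by $Y$, so $\ell_p^{\,dm}\in\overline{R}_m$ imposes $\delta$ conditions; since the choice of $p$ contributes only one parameter, the expected codimension of $Z_m$ is $\delta-1\ge2$, which is positive precisely because $d>3$ forces $\delta\ge3$. (For $d=3$ one has $\delta=1$, and the flex lines of a nodal cubic already attain the top ray, which is why the hypothesis $d>3$ is essential.) Hence each $Z_m$ is a proper closed subset, $\bigcup_{m\ge1}Z_m$ is a countable union of proper closed subsets, and a very general $Y$ avoids all of them; for such $Y$ every $\nu_p$ has unattained top ray and is therefore not Khovanskii-finite. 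The uncountability of $\KK$ guarantees that very general curves exist.

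The main obstacle is the transversality input in the dimension count: one must show that the $\delta$ gluing conditions defining $\ell_p^{\,dm}\in\overline{R}_m$ are genuinely independent as $(Y,p)$ vary, so that $Z_m$ really attains the expected codimension rather than collapsing. I would establish this by producing, for each $m$, a single explicit nodal rational curve together with a first-order (tangent-space) computation at one point of the incidence variety, and then invoke upper semicontinuity of the rank of the conditions; $\Char(\KK)=0$ and the irreducibility of the relevant Severi variety ensure that the very general curve is nodal with nodes in general position, so the independence propagates. Controlling this uniformly for every $m$ is the delicate step; everything else is formal once the classification of homogeneous valuations and the slope-$d$ computation are in place.
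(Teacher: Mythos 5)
Your first two steps are sound and match the paper: the classification of full rank homogeneous valuations as $\nu_p$ for $p\in\PP^1$ is exactly the setup of \S\ref{sec:curves} (via \cite[\S 5.2]{poised}), and your necessary condition --- that Khovanskii-finiteness forces the top ray of slope $d$ to be \emph{attained}, i.e.\ $\ell_p^{\,dm}\in\overline{R}_m$ for some $m$ --- is precisely the ``only if'' direction of Theorem~\ref{thm:curve}. The countable-union-of-closed-loci architecture for the ``very general'' conclusion is also the same as in Theorem~\ref{thm:kf}. The gap is in the one step you yourself flag as delicate: you never actually prove that each locus $Z_m$ is a \emph{proper} closed subset. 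Your expected-codimension count ($\delta-1\ge 2$) is only a heuristic; to rule out that $Z_m$ collapses onto the whole parameter space you must exhibit genuine independence of the conditions, and your proposed route (a tangent-space computation at an unspecified explicit curve, semicontinuity, irreducibility of Severi varieties) is not carried out. Note also that what you actually need is only codimension $\ge 1$, not the expected $\delta-1$, so the dimension count is doing no work: everything reduces to producing, for each $m$, one curve outside $Z_m$ --- and once you are producing explicit curves anyway, the tangent-space machinery is superfluous.

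This is exactly how the paper closes the gap, and more efficiently than your plan: Example~\ref{ex:notkhofin} exhibits, for every $d\ge 4$, the single subspace $L'=\bangle{s^{d-1}t,\,s^{d-2}t^2,\,s^d+t^d}$ of degree $d$ with $(L')^k\cap C_{dk}=\emptyset$ for \emph{all} $k$ simultaneously (a direct computation with the basis of $(L')^k$ shows $(\beta s-\alpha t)^{dk}\notin (L')^k$ for every $(\alpha:\beta)$). This one example witnesses properness of every $\overline{Y_k'}$ at once, which also disposes of your worry about ``controlling this uniformly for every $m$.'' Two smaller points: your restriction to nodal curves and node-gluing conditions is an unnecessary detour --- the paper works only with the arithmetic genus $g=(d-1)(d-2)/2$, i.e.\ the codimension of $L^k$ in $\Sym^{dk}(\KK^2)$, with no assumption on singularity type; and the closedness of $Z_m$ requires handling the locus where $\dim \overline{R}_m$ jumps (the indeterminacy locus of $L\mapsto L^k$), which the paper absorbs into the subvarieties $Z_{d,dk-g+1}$ of Remark~\ref{rem:genus}. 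If you replace your transversality step by the explicit example, your argument becomes the paper's proof.
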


\subsection{Connections to Tropical Geometry}

Given $X$ and $R$ as above, one may seek to construct a full rank valuation $\nu$ which is Khovanskii-finite. 
One approach, due to K.~Kaveh and C.~Manon, is via tropical geometry \cite{kaveh-manon}. After fixing an affine embedding $X\hookrightarrow \A^n$ and intersecting with the torus $(\KK^*)^n$, the tropicalization map yields a polyhedral fan $\trop(X^\circ)$ whose cones $\sigma$ correspond to certain initial degenerations $\In_\sigma(I_X)$ of the ideal $I_X$ of $X$. Maximal cones $\sigma$ with prime initial ideal $\In_\sigma(I_X)$ give rise to  Khovanskii-finite full rank valuations \cite[Theorem 1]{kaveh-manon}, and in some sense all `reasonable' Khovanskii-finite valuations may be obtained in this fashion \cite[Theorem 2]{kaveh-manon}. 

Kaveh and Manon ask \cite[Problem 1]{kaveh-manon} if every projective variety $Y$ may be embedded in such a way that the resulting tropicalization contains a maximal cone with prime initial ideal; this would guarantee the existence of a full rank Khovanskii-finite valuation for $X$, the affine cone over some projective embedding of $Y$. L.~Bossinger, S.~Lamboglia, K.~Mincheva, and F.~Mohammadi propose an algorithm \cite[Procedure 1]{boss-et-al} to produce such an embedding. The algorithm starts with some embedding of $Y$ in projective space, and then proceeds to repair the embedding by essentially taking Veronese re-embeddings coupled with projective changes of coordinates.

Our Theorem \ref{thm:main4} shows that there are examples for which this algorithm will never terminate, see Remark \ref{rem:procedure}. In particular, to solve Problem 1 from \cite{kaveh-manon}, in general one will be forced to alter the polarization of $Y$ in a non-trivial fashion, i.e. one cannot simply take a multiple. In a similar vein, Kaveh and Manon ask \cite[Problem 2]{kaveh-manon} for an algorithm to construct a full rank Khovanskii-finite valuation $\nu$ for a given domain $R$. Our Theorem \ref{thm:main4} again shows that this is impossible in general if one imposes the additional constraint that the valuation be $\ZZ$-homogeneous.

\subsection{Organization}
We now describe the organization of the remainder of the paper. \S\ref{sec:prelim} introduces the central concepts surrounding (homogeneous) valuations and Khovanskii-finiteness. \S\ref{sec:veronese} studies the relationship between Khovanskii-finiteness and Veronese subalgebras and proves our first main result, Theorem \ref{thm:main1}. In \S\ref{sec:norm} we show that passing to the normalization of a domain doesn't change Khovanskii-finiteness properties. \S\ref{sec:global} contains a discussion of a sufficient criterion for the valuation cone of a homogeneous valuation to be polyhedral, an important prerequisite for Khovanskii-finiteness.

In the remaining two sections, we turn our attention to special cases. In \S\ref{sec:curves} we discuss projective coordinate rings for rational curves and prove Theorems \ref{thm:main3} and \ref{thm:main4}. Finally, \S\ref{sec:c1} concerns itself with almost toric varieties, where we prove Theorem \ref{thm:main2} along with other related results.

\subsection*{Acknowledgements}
We thank Bernd Sturmfels for useful discussions, and Sara Lamboglia, Ahmad Mokhtar, and the anonymous referee for helpful comments. Both authors were partially supported by NSERC.

\section{Valuations and Gradings}\label{sec:val}
\subsection{Preliminaries}\label{sec:prelim}

Let $R$ be a finitely generated $\KK$-domain
of Krull dimension $d$, and $(\Gamma, >)$
a totally ordered finitely generated free abelian group. We call 
a map $\nu \colon R\setminus \left\{0\right\} \rightarrow \Gamma$
a \emph{$\KK$-valuation} with values in $\Gamma$ 
if
$\nu(\KK^*) = 0$ holds and for all $f, g \in R \setminus \left\{0\right\}$ we have
$\nu(fg) = \nu(f)+\nu(g)$ and
$\nu(f+g) \geq \min\left\{\nu(f), \nu(g)\right\}$.
The image $S(R,\nu)$ of $\nu$ in $\Gamma$
is a subsemigroup;
the so-called \emph{value semigroup}.
The \emph{valuation cone} $C(R, \nu)$ of $R$ with respect to $\nu$
is the closure in $\Gamma_\RR=\Gamma\otimes \RR$ of the
convex cone
generated by
$S(R, \nu)$. 
The convex cone
generated by $S(R, \nu)$
is not closed in general.
For details on valuations, see e.g.~\cite[VI]{zs}.

The \emph{rank} $r(\nu)$ of a valuation $\nu$ is the rank of the lattice generated by $S(R, \nu)$. 
By possibly shrinking $\Gamma$, we may (and will) always assume that this lattice is $\Gamma$.
The rank $r(\nu)$ is always at most the Krull
dimension of $R$ and we call $\nu$ a \emph{full rank valuation}
if equality holds.

\begin{remark}\label{rem:extend}
Let $R$ be a finitely generated
$\KK$-domain and $\nu$ a valuation.
Then one may extend $\nu$ to the field of fractions $\KK(R)$ of $R$ via $\nu(f/g)=\nu(f)-\nu(g)$.
In particular if $R'$ is a subalgebra of the field of fractions
of $R$, the valuation $\nu$ gives rise to a unique valuation on $R'$
and in the following we denote this valuation also with $\nu$.
\end{remark}

We will be particularly interested in \emph{homogeneous} valuations. For this, let $M$ be a lattice and  
$R$ an $M$-graded finitely generated $\KK$-domain.
\begin{definition}
The \emph{weight monoid} of $R$ is the set
$$S(R):=\left\{w \in M; \ R_w \neq \left\{0\right\}\right\}$$ 
and we call the cone $\omega(R) := \cone(S(R)) \subseteq M_\RR=M\otimes \RR$
the \emph{weight cone} of $R$. 
\end{definition}
\noindent When $R$ is finitely generated, this cone is always rational and polyhedral.
The lattice generated by $S(R)$ is a sublattice of $M$; by replacing $M$ by it, we may (and will) always assume that $S(R)$ generates $M$.

\begin{definition}[{See \cite[\S5.2]{poised}}]\label{defn:homog}
 We call a valuation
$\nu$ as above \emph{$M$-homogeneous} if for all
$f \in R$ 
$$\nu(f) = \min\left\{\nu(f_u)\right\}$$
holds, where $f= \sum_{u \in M} f_u$ is the decomposition
of $f$ into homogeneous elements. 
Moreover we call $\nu$
\emph{fully homogeneous} if $\Gamma=M\times\Gamma'$ and 
for each $u\in M$ and $f_u\in R_u$, $\pi(\nu(f_u)) = u$, where $\pi:\Gamma\to M$ denotes the projection.
\end{definition}
\begin{example}\label{ex:quartic}
Consider the ring
$R:= \KK[tx, t^2x, (1+t^4)x]$
with grading defined by 
$\deg(f(t)x^a) = a$. The weight monoid is $\ZZ_{\geq 0}$, and the weight cone is $\RR_{\geq 0}$.

We consider the homogeneous valuation given by
\[\nu \colon R\setminus\{0\} \rightarrow \ZZ^2, \quad f(t)x^a \mapsto (a, \deg(f(t))).\]
Here we have ordered $\ZZ^2$ lexicographically, with the order on the second factor reversed.
A straightforward calculation shows that for $k>1$, the degree $k$ piece of $R$ has a $\KK$-basis consisting of
\[
	t^2x^k,\ldots,t^{4k-4}x^k,(t+t^{4k-3})x^k,t^{4k-2}x^k,(1+t^{4k})x^k.
\]
It follows that the value semigroup $S(R, \nu)$ is minimally generated 
by the infinite set
$\left\{((1,4), (1,2), (1,1), (k,2)\ | \ \text{ for } k > 2 \right\}$.
\begin{center}
\begin{tikzpicture}[thick,scale=0.3, every node/.style={scale=0.6}]

 \draw[thick] (0,0)--(2, 8); 
 \draw[thick] (0,0)--(5.1,0); 

\foreach \Point in {
(0,0), (1,1), 
(1, 2), (1,4),
(2,2), (2,3), (2,4),(2,5), (2,6), (2,8), 
(3,2), (3,3), (3,4), (3,5), (3,6), (3,7), (3,8),
(4,2), (4,3), (4,4), (4,5), (4,6), (4,7), (4,8),
(5,2), (5,3), (5,4), (5,5), (5,6), (5,7), (5,8)
}{
    \node at \Point {\textbullet};
}

\foreach \Point in {(1,0),(1,3), (2,0), (3,0), (1,1), (2,1), (2,7), (3,1) , (4,0), (4,1), (5,0), (5,1)}{
    \node at \Point {$\circ$};
}
\end{tikzpicture}
\end{center}
The cone generated by this semigroup is not closed. The closure is
$C(R, \nu) = \cone((1,0), (1, 4))$.

We will later see in Example \ref{ex:notkhofin} that for any full rank homogeneous valuation of this domain $R$, the associated semigroup is not finitely generated.
\end{example}

\begin{rem}
Given a fully homogeneous valuation  
\[\nu \colon R\setminus \left\{0\right\} \rightarrow M \times \Gamma',\]
the projection to $\Gamma'$ determines a valuation $\nu'$ on the field of homogeneous fractions of $R$. On the other hand, the image of $S(R,\nu)$ under $\pi$ is exactly $S(R)$. Likewise, the image of $C(R,\nu)$ under $\pi$ is $\omega(R)$.
\end{rem}

\begin{rem}
We may obtain any full rank homogeneous valuation from a fully homogeneous one by composing with an isomorphism of semigroups. Hence, when studying the value semigroups of full rank homogeneous valuations, we will restrict our attention to fully homogeneous ones.
\end{rem}

We now define the key concepts we will be studying:
\begin{definition}
Let $R$ be a $\KK$-domain. 
\begin{enumerate}
\item
We call a valuation $\nu$
\emph{Khovanskii-finite} (with respect to $R$) 
if the value semigroup $S(R, \nu)$ is finitely generated. 
\item
Assume $R$ is $M$-graded. We call
$R$ \emph{homogeneously Khovanskii-finite} 
if every full rank homogeneous valuation $\nu$ 
is Khovanskii-finite. 
\end{enumerate}
\end{definition}

\begin{example}\label{ex:toric}
	Let $R$ be the coordinate ring of an affine (not-necessarily-normal) toric variety $X$. Then $R=\KK[S]$ holds for some finitely generated semigroup $S$. Any full rank homogeneous valuation $\nu$ must have the form 
\[
\nu(\chi^u)=\psi(u),\qquad u\in S
\]
for some injective linear map $\psi:M\to \Gamma$. It follows that the value semigroup $S(R,\nu)$ is isomorphic to $S$, in particular it is finitely generated. Hence, $R$ is homogeneously Khovanskii-finite.
\end{example}

\begin{example}[Example \ref{ex:quartic} continued]
	From Example \ref{ex:quartic} we see that 
	\[
		R=\KK[tx,t^2x,(1+t^4)x]
	\]
	is \emph{not} homogeneously Khovanskii-finite.
\end{example}

\begin{rem}\label{rem:kf}
	Our terminology ``Khovanskii-finite'' arises from the fact that for full rank valuations, Khovanskii-finiteness is equivalent to the existence of a finite Khovanskii basis, see e.g.~\cite[remark on pg.~293]{kaveh-manon}.
We note, however, that for non-full rank valuations, Khovanskii-finiteness (in our sense) does not necessarily imply the existence of a finite Khovanskii basis.
\end{rem}
The following lemma gives a useful criterion for Khovanskii-finiteness:
\begin{lemma}\label{lem:fingen}
Let $S$ be a sub-semigroup of $\ZZ^n$. Then $S$ is finitely generated if and only if the convex cone $\sigma\subset \RR^n$ generated by $S$ is rational polyhedral and closed.
\end{lemma}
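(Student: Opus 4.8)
The forward implication is the routine one, so I would dispense with it quickly: if $S$ is generated by finitely many elements $s_1,\dots,s_k\in\ZZ^n$, then $\sigma=\cone(S)=\{\sum_i\lambda_i s_i:\lambda_i\in\RR_{\ge 0}\}$ is rational polyhedral by Minkowski--Weyl, and a cone generated by finitely many vectors is automatically closed. All the content lies in the converse, and my plan there is to compare $S$ with its saturation and import finiteness from commutative algebra rather than argue combinatorially face-by-face.

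For the setup, write $L=\ZZ S$ for the subgroup generated by $S$ and set $\bar S:=\sigma\cap L$. Since $\sigma$ is rational polyhedral, Gordan's lemma shows $\bar S$ is a finitely generated semigroup, say with generators $h_1,\dots,h_t$, and clearly $S\subseteq\bar S$. The idea is that $\bar S$ is a well-understood (saturated, finitely generated) overobject, and I want to control how far $S$ sits below it.

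The crucial step — and the one place where closedness of $\sigma$ enters essentially — is to produce a single integer $N\ge 1$ with $N\bar S\subseteq S$. Because $\sigma=\cone(S)$ is \emph{closed}, each generator $h_i$ lies in $\cone(S)$ itself and not merely in its closure; hence $h_i=\sum_j\lambda_j s_j$ with $\lambda_j\in\RR_{\ge 0}$ and $s_j\in S$. As this is a rational linear system the $\lambda_j$ may be chosen rational, and clearing denominators gives $N_i h_i\in S$ for some $N_i\in\NN$. Setting $N=\lcm_i N_i$ and using that every element of $\bar S$ is a nonnegative integer combination of the $h_i$, I obtain $N\bar S\subseteq S$. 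Had $\sigma$ merely been rational polyhedral but not closed, some $h_i$ would lie only in $\overline{\cone(S)}$ and no multiple need land in $S$ — precisely the failure exhibited by Example \ref{ex:quartic}.

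Finally I would upgrade $N\bar S\subseteq S\subseteq\bar S$ to finite generation by passing to semigroup algebras $\KK[N\bar S]\subseteq\KK[S]\subseteq\KK[\bar S]$, writing $\KK[\bar S]=\bigoplus_{w\in\bar S}\KK\,\chi^w$. For each $w\in\bar S$ one has $(\chi^w)^N=\chi^{Nw}\in\KK[N\bar S]$, so $\chi^w$ is integral over $\KK[N\bar S]$; since $\KK[\bar S]$ is finitely generated as an algebra, it is a finite $\KK[N\bar S]$-module. As $\KK[N\bar S]$ is a finitely generated $\KK$-algebra, hence Noetherian, the intermediate submodule $\KK[S]$ is finitely generated over $\KK[N\bar S]$, and because everything is $\ZZ^n$-graded with one-dimensional pieces the module generators may be taken to be monomials $\chi^{s_1},\dots,\chi^{s_p}$ with $s_j\in S$. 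Thus $S=\bigcup_{j}(s_j+N\bar S)$, and together with the finite generating set $Nh_1,\dots,Nh_t$ of $N\bar S$ this yields a finite semigroup generating set for $S$. The main obstacle is exactly this converse: a direct combinatorial attempt to bound the generators needed near the faces of $\sigma$ is awkward, whereas the sandwich $N\bar S\subseteq S\subseteq\bar S$ converts the claim into the standard fact that a submodule of a finite module over a Noetherian ring is finitely generated, with closedness of $\sigma$ supplying the one genuinely geometric input.
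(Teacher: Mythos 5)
Your proof is correct and follows essentially the same strategy as the paper's: sandwich $S$ between a finitely generated subsemigroup and the saturated semigroup $\sigma\cap L$, then deduce finite generation from the fact that $\KK[S]$ is a submodule of a finite module over the Noetherian semigroup algebra of the smaller semigroup. The only difference is how closedness of $\sigma$ is exploited to build the lower subsemigroup --- you scale the whole saturation by $N$ after writing each Gordan generator as a finite nonnegative \emph{rational} combination of elements of $S$, whereas the paper picks ray generators of $\sigma$ lying in $S$ together with lattice generators of $\ZZ S$ --- but this is a cosmetic variation (if anything yours is slightly more robust, since it does not implicitly rely on $\sigma$ being pointed).
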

\begin{proof}
Assume $\sigma$ is closed and rational polyhedral.
Then for each ray $\rho \preceq \sigma$ there exists an element
$u_{\rho} \in S$ that generates $\rho$. 
Consider the lattice $M$ generated by $S$; this is in fact generated by finitely many elements of $S$, say $u_1,\ldots,u_k$. Let $S'$ be the semigroup generated by $u_1,\ldots,u_k$ and $u_\rho$ as $\rho$ ranges over the rays of $\sigma$.

The saturation of $S'$ in $M$ is $\sigma\cap M$, which in particular contains $S$. Since $\KK[S']$ is noetherian, $\KK[\sigma\cap M]$ is a finitely-generated $\KK[S']$-module, hence so is $\KK[S]$. It follows that $S$ is finitely generated as an $S'$-module, in particular, it is finitely generated as a semigroup. 

The other direction of the lemma is clear.
\end{proof}

We now recall the connection between $S(R,\nu)$, Newton-Okounkov bodies, and degree.
Let $R$ be a $\ZZ_{\geq 0}$-graded finitely generated $\KK$-domain
and $\nu\colon R\setminus \left\{0\right\} \rightarrow \ZZ \times \Gamma$
a homogeneous full rank valuation.
The \emph{Newton-Okounkov body} is the convex set
$$\Delta(R, \nu) := \pi_2(C(R, \nu)\cap \pi_1^{-1}(1)),$$
where $\pi_1, \pi_2$ are the projections of
$(\ZZ\times \Gamma)_\RR $ to $\ZZ_\RR$ and $\Gamma_\RR$.

\begin{remark}[See {\cite[Theorem 2.31]{kk}}] \label{rem:okbo}
Let $Y \subseteq \PP^n$ be a projective variety of dimension $d$ 
and let $R$ be its homogeneous coordinate ring. Then $R$ comes with a natural
$\ZZ_{\geq 0}$-grading. Consider a homogeneous full rank valuation $\nu$.
Then the dimension of the Newton-Okounkov body $\Delta(R, \nu)$ 
equals the degree of the Hilbert polynomial. Its volume
is the leading coefficient of the Hilbert polynomial, and thus
$1/\dim(Y)!$ times the degree of $Y$. 
\end{remark}

\subsection{Veronese Subalgebras}\label{sec:veronese}
In studying Khovanskii-finiteness of a domain $R$, one natural operation at our disposal is that of passing to a \emph{Veronese subalgebra}:

\begin{definition}
Let $R$ be a finitely generated $M$-graded $\KK$-domain
and $L \subseteq M$ a submonoid.
The \emph{Veronese subalgebra associated to $L$} is
$$R(L) := \bigoplus_{w \in L} R_w \subseteq R.$$
If $\sigma \subseteq M_\QQ$ is a cone we set for short
$R_\sigma := R(\sigma \cap M)$.
\end{definition}
\noindent If $L$ is a finitely generated submonoid of $M$, then $R(L)$ will also be finitely generated, see e.g.~\cite[Proposition 1.1.2.4]{coxbook}. In particular, If $\sigma$ is a rational polyhedral cone, then $R_\sigma$ is finitely generated.
By Remark \ref{rem:extend}, if $\nu$ is any valuation on $R$, we may also view it as a valuation on any Veronese subalgebra $R(L)$.

Our main result in this section says that to check Khovanskii-finiteness of a fully homogeneous valuation, one may essentially reduce to the $\ZZ$-graded case. Geometrically, this says that instead of considering $\Spec R$, we may consider its GIT quotients under the torus action induced by the grading. 
\begin{theorem}\label{thm:khofin}
Let $R$ be a finitely generated $M$-graded $\KK$-domain
with $R_0 = \KK$
and $\nu \colon R\setminus\{0\} \rightarrow \Gamma=\Gamma'\times M$ a fully homogeneous valuation.
Assume that ${C(R, \nu)}$ is rational polyhedral, and let $\pi:\Gamma_\RR\to M_\RR$ be the projection.
Then the following statements are equivalent:
\begin{enumerate}
\item\label{it:1}
$\nu$ is Khovanskii-finite with respect to $R$.
\item \label{it:2}
For all rational rays $\rho\subseteq{C(R, \nu)}$, 
$\nu$ is Khovanskii-finite with respect to
the Veronese subalgebra $R_{\pi(\rho)}$ and the valuation cone of $R_{\pi(\rho)}$ 
equals $\pi^{-1}(\pi(\rho))$.
\item \label{it:3}
 For all rays $\rho\preceq{C(R, \nu)}$,
$\nu$ is Khovanskii-finite with respect to
the Veronese subalgebra $R_{\pi(\rho)}$ and the valuation cone of $R_{\pi(\rho)}$ 
equals $\pi^{-1}(\pi(\rho))$.
\end{enumerate}
\end{theorem}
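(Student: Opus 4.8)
The plan is to reduce all three statements to purely combinatorial assertions about the value semigroup $S:=S(R,\nu)$ and its cone $C:=C(R,\nu)$, and then to move between ``global'' and ``ray-by-ray'' finiteness by working with a fixed generating set of $S$. The preliminary step is the identity
\[
	S(R_{\pi(\rho)},\nu)=S\cap\pi^{-1}(\pi(\rho))=:S_\rho ,
\]
valid for any ray $\rho$: since $\nu$ is homogeneous, the value of any $f\in R_{\pi(\rho)}$ is attained on one of its homogeneous components, whose value lies in $S$ and projects into $\pi(\rho)$, and conversely every such value comes from a homogeneous element of $R_{\pi(\rho)}$. Hence $C(R_{\pi(\rho)},\nu)=\overline{\cone(S_\rho)}$, which is always contained in the slice $P_\rho:=C\cap\pi^{-1}(\pi(\rho))$; note that $P_\rho$ is rational polyhedral, being an intersection of rational polyhedral cones. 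Interpreting $\pi^{-1}(\pi(\rho))$ inside $C$ as in Theorem \ref{thm:main1}, I read the condition on $R_{\pi(\rho)}$ appearing in \ref{it:2} and \ref{it:3} as: $\cone(S_\rho)$ is rational polyhedral and closed (Khovanskii-finiteness, via Lemma \ref{lem:fingen}) together with $\overline{\cone(S_\rho)}=P_\rho$. These two are jointly equivalent to the single equality $\cone(S_\rho)=P_\rho$. Likewise, by Lemma \ref{lem:fingen}, statement \ref{it:1} is equivalent to $\cone(S)=C$.

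With this dictionary, $\ref{it:2}\Rightarrow\ref{it:3}$ is immediate, since every extremal ray $\rho\preceq C$ is in particular a rational ray contained in $C$. For $\ref{it:3}\Rightarrow\ref{it:1}$, I would use that a pointed rational polyhedral cone is generated by its extremal rays: for each extremal ray $\rho\preceq C$ with primitive generator $r_\rho$, the containment $r_\rho\in\rho\subseteq P_\rho=\cone(S_\rho)\subseteq\cone(S)$ (the middle equality being \ref{it:3}) shows that all generators of $C$ already lie in $\cone(S)$, whence $C=\cone(\{r_\rho\})\subseteq\cone(S)\subseteq C$. One small point to check here is that every extremal ray indeed projects to a genuine ray of $M_\RR$: a ray $\rho\subseteq\ker\pi$ would give $S_\rho=\{0\}$ (as $R_0=\KK$ forces $S\cap\ker\pi=\{0\}$) and would violate $\cone(S_\rho)=P_\rho$, so it is excluded by \ref{it:3}.

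The substance of the theorem is $\ref{it:1}\Rightarrow\ref{it:2}$, and the only real difficulty is the case where $\pi(\rho)$ lies in the \emph{interior} of $\omega(R)$: there the naive argument, that the summands of a point on $\pi(\rho)$ must themselves project onto $\pi(\rho)$, breaks down. To handle all rational rays uniformly I would fix semigroup generators $g_1,\dots,g_n$ of $S$, so that $\cone(g_1,\dots,g_n)=\cone(S)=C$ by \ref{it:1}, and introduce the ``syzygy cone''
\[
	\Sigma:=\Bigl\{\lambda\in\RR_{\geq 0}^n \ :\ \textstyle\sum_i\lambda_i\,\pi(g_i)\in\pi(\rho)\Bigr\},
\]
a pointed rational polyhedral cone. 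Writing $\phi(\lambda)=\sum_i\lambda_i g_i$, one checks $\phi(\Sigma)=\cone(S)\cap\pi^{-1}(\pi(\rho))=P_\rho$. The key observation is that an integral generator $\lambda$ of an extremal ray of $\Sigma$ maps to a nonnegative integral combination $\phi(\lambda)=\sum_i\lambda_i g_i\in S$ that projects into $\pi(\rho)$, so $\phi(\lambda)\in S_\rho$. Since $\Sigma$ is the cone on its extremal rays and $\phi$ is linear, $P_\rho=\phi(\Sigma)=\cone(\phi(\text{ray generators}))\subseteq\cone(S_\rho)\subseteq P_\rho$, giving $\cone(S_\rho)=P_\rho$; the finiteness half of \ref{it:2} then follows from Lemma \ref{lem:fingen}, as $P_\rho$ is rational polyhedral and closed.

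I expect the main obstacle to be precisely this interior-ray case of $\ref{it:1}\Rightarrow\ref{it:2}$: over such a ray one must produce enough elements of the value semigroup to exhaust the \emph{full} slice $P_\rho$ rather than a smaller subcone, which is exactly what passing to the extremal rays of the syzygy cone $\Sigma$ accomplishes. A secondary point requiring care is the bookkeeping around $\ker\pi$ and the pointedness of $C$ and $\omega(R)$, ensuring that the rays under consideration project to genuine rays and that $P_\rho$ is consistently interpreted as the slice of $C$ over $\pi(\rho)$ rather than as a full preimage in $\Gamma_\RR$.
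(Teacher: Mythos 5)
Your proof is correct, and its overall skeleton matches the paper's: you identify $S(R_{\pi(\rho)},\nu)$ with $S(R,\nu)\cap\pi^{-1}(\pi(\rho))$, translate all three conditions through Lemma~\ref{lem:fingen}, get \eqref{it:2}$\Rightarrow$\eqref{it:3} for free, and prove \eqref{it:3}$\Rightarrow$\eqref{it:1} by observing that each extremal ray of $C(R,\nu)$ acquires a generator in the value semigroup. The genuine difference lies in \eqref{it:1}$\Rightarrow$\eqref{it:2}. The paper first produces, for each extremal ray of $C(R,\nu)$, an element of $S(R,\nu)$ generating it, and then obtains generators for arbitrary rational rays $\tau\subseteq C(R,\nu)$ by taking products of these elements (i.e.\ nonnegative integral combinations of the extremal generators after clearing denominators); applied to the extremal rays of the slice $P_\rho=C(R,\nu)\cap\pi^{-1}(\pi(\rho))$ this gives the valuation-cone equality, while the finite generation of $S(R,\nu)\cap\pi^{-1}(\pi(\rho))$ is simply asserted to follow from that of $S(R,\nu)$. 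Your syzygy-cone argument handles both points at once: the integral generators of the extremal rays of $\Sigma\subseteq\RR_{\geq 0}^n$ map into $S_\rho$, which yields $\cone(S_\rho)=P_\rho$ directly and hence finiteness via Lemma~\ref{lem:fingen}. This buys a self-contained justification of the intersection-finiteness claim that the paper leaves implicit (that a finitely generated subsemigroup of $\ZZ^n$ intersected with a rational polyhedral cone stays finitely generated), at the modest cost of introducing the auxiliary cone $\Sigma$. Both arguments rest on the same implicit conventions --- pointedness of $C(R,\nu)$ and reading $\pi^{-1}(\pi(\rho))$ as the slice of $C(R,\nu)$ rather than the full preimage in $\Gamma_\RR$ --- which you at least flag explicitly.
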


\begin{proof}
Assume $\nu$ is Khovanskii-finite with respect to $R$. Then
$S(R, \nu)$ is finitely generated and thus 
for every  ray $\rho\preceq C(R, \nu)$
there exists an element 
$f_\rho \in R_{\pi(\rho)}$ 
such that $\nu(f_\rho)$ generates $\rho$. By considering products of such elements, the same claim follows for every rational ray $\rho\subseteq C(R,\nu)$.

For any such ray $\rho$, we have 
$$S(R_{\pi(\rho)}, \nu) = S(R, \nu) \cap \pi^{-1}(\pi(\rho)).$$
Since $S(R, \nu)$ is finitely generated, so is $S(R, \nu) \cap \pi^{-1}(\pi(\rho))$.
This shows that (\ref{it:1}) implies (\ref{it:2}). Condition (\ref{it:3}) follows trivially from (\ref{it:2}).

Suppose finally that condition (\ref{it:3}) holds and consider a ray $\rho$ of ${C(R, \nu)}$. Since the valuation $\nu$
is Khovanskii-finite  with respect to
$R_{\pi(\rho)}$
and
$${C(R_{\pi(\rho)}, \nu)} 
=
\pi^{-1}(\pi(\rho) )
$$
holds,
there exists an element 
$f_\rho \in R_{\pi(\rho)}$ 
such that $\nu(f_\rho)$ generates $\rho$. 
The first condition now follows from Lemma \ref{lem:fingen}.
\end{proof}
\noindent For an illustration of this theorem at work in a special case, see Example \ref{ex:at}.

We will next show that the second condition in Theorem \ref{thm:khofin} can be weakened somewhat. 
For Khovanskii-finiteness, it is enough to check
the condition $\pi^{-1}(\pi(\rho)) = C(R_{\pi(\rho)}, \nu)$ only for the rays
$\rho$ with $\pi(\rho) \cap \omega(R)^\circ = \emptyset$:
\begin{lemma}\label{lem:VSA}
Let $R$ be a finitely generated $M$-graded $\KK$-domain
with $R_0 = \KK$ and 
$$\nu\colon R \setminus \left\{0\right\} \rightarrow M \times \Gamma$$ 
a fully homogeneous valuation. 
Consider rational hyperplanes $H_1, \ldots, H_r \subseteq M_\RR$
such that 
$H_1 \cap\ldots\cap  H_r \cap \omega(R)^\circ \neq \emptyset$. Then for
$R' := R(H_1\cap \ldots \cap H_r \cap S(R))$ we have
$${C(R', \nu)} 
=
\pi^{-1}(\omega(R'))\cap {C(R, \nu)},
$$
where $\pi \colon C(R, \nu) \rightarrow M_\RR$ denotes the projection.
\begin{proof}
Let $S(R)\subseteq M$ be the weight monoid and 
$\omega(R)$ the weight cone of $R$. 
Let $a_1, \ldots, a_r$ be generators for 
$\omega(R) \cap M$. Then
$\lambda_i a_i \in S(R)$ holds for suitable
$\lambda_i \in \ZZ_{\geq 1}$, since $S(R)$ is pointed and generates $M$. 
Setting $\lambda := \lcm(\lambda_1, \ldots, \lambda_r)$
we obtain a saturated submonoid
$S' := S(R) \cap \lambda M \subseteq S(R)$ and the Veronese subalgebra $R(S')$ of $R$ has the same valuation cone as $R$. 
So in the following considerations we may restrict to the case that $S(R)$ is saturated. 
Using $M \cong \ZZ^m$ and $\Gamma \cong \ZZ^n$
for some $m,n \in \ZZ_{\geq 0}$
we may furthermore assume 
$\nu\colon R \setminus \left\{0\right\} \rightarrow \ZZ^{m+n}$.

In a first step we show the statement for Veronese subalgebras $R'$ defined
by an intersection of one hyperplane with the weight monoid.
Let $H\subseteq \QQ^m $ be any hyperplane 
with $H \cap \omega(R)^\circ \neq \emptyset$
and fix elements $v \in \ZZ^m, u^+, u^- \in S(R, \nu)$ such that $v$ is primitive and
$\bangle{v, H} = 0$, $\bangle{v, \pi(u^+)} = 1$ and
$\bangle{v, \pi(u^-)} = -1$. 
These exist since $S(R)$ is saturated.
We have to show that
for any point $x$ in the relative boundary of 
$$
\pi^{-1}(\omega(R'))\cap {C(R, \nu))} 
$$
and any $\epsilon>0$ there is a point 
$x' \in S(R', \nu)$ and
$\mu \in \QQ$ with
$|x-\mu x'| < \epsilon$.

Let $a \neq 0$ be any point in the relative boundary. 
As $R_0 = \KK$ and $\nu(\KK^*) = 0$ holds we may assume $a:=(1, a_2, \ldots, a_{m+n}) \in \QQ_{\geq 0}^{m+n}$ with $\pi(a)=(1, a_2, \ldots, a_m) \in \omega(R)$ the corresponding weight. Fix $\epsilon' >0$. Then there exists a point 
$(k, a_2', \ldots, a_{m+n}') \in S(R, \nu)$ such that
$|a_i'- k \cdot a_i| < k \cdot \epsilon'$ holds for all $i$. 
Let $\pi(a')=(k, a'_2, \ldots, a'_m) \in \omega(R)$ be the corresponding weight.

Consider $\lambda:=-\langle v,\pi(a')\rangle$; if this is zero, then $\pi(a')\in H$ and we are done. So without loss of generality, we assume $\lambda >0$. Then
$\pi(a') + \lambda \pi(u^+) \in H\cap S(R)$.
We obtain
\begin{align*}
0 \leq \lambda &= \bangle{-v, \pi(a')} = \bangle{-v, \pi(a')-k\pi(a)} 
\\
&\leq
\left|\sum_{i=2}^m v_i(a_i'-ka_i)\right| \leq \sum_{i=2}^m |v_i||a_i'-ka_i| < \left(\sum_{i=2}^m |v_i|\right)k\epsilon'
\end{align*}
and thus
\begin{align*}
\left|\frac{1}{k + \lambda u_1^+}(a_i'+\lambda u_i^+ - a_ik - a_i\lambda u_1^+)\right| 
& \leq
\left|\frac{a_i'- a_ik}{k + \lambda u_1^+}\right| +
\left|\frac{\lambda u_i^+ }{k + \lambda u_1^+}\right| + 
\left|\frac{a_i\lambda u_1^+}{k + \lambda u_1^+}\right|
\\
 &<
 \epsilon' + u_i^+(\sum |v_i|)\epsilon' + a_iu_1^+(\sum |v_i|) \epsilon'. 
\end{align*}
As $a, v$ and $u^+$ are fixed we conclude
$|\frac{1}{k + \lambda u_1^+}(a'+\lambda u^+) -a| < \epsilon$
for suitably chosen~$\epsilon'$.
This proves the claim for a single hyperplane.

By using this argument inductively we obtain the statement for Veronese subalgebras 
$R'$ defined by the intersection of several rational hyperplanes $H_1, \ldots, H_r$ with the weight cone such that $H_1\cap \ldots \cap H_r \cap \omega(R)^\circ \neq \emptyset$.
\end{proof}
\end{lemma}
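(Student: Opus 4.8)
The plan is to reduce the asserted cone identity to a statement about the value semigroup and then prove it by approximation. The key observation is that full homogeneity of $\nu$ forces $\pi(\nu(f_u))=u$ for every homogeneous $f_u\in R_u$, so the value semigroup of the Veronese subalgebra $R'$ is precisely the slice
\[
S(R',\nu)=S(R,\nu)\cap\pi^{-1}(V),\qquad V:=H_1\cap\cdots\cap H_r.
\]
Because $\pi$ maps $C(R,\nu)$ onto $\omega(R)$, the target identity $C(R',\nu)=\pi^{-1}(\omega(R'))\cap C(R,\nu)$ is equivalent to $C(R',\nu)=\pi^{-1}(V)\cap C(R,\nu)$. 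One inclusion is formal: since $\pi^{-1}(V)$ is a linear subspace, hence a closed convex cone, $\cone\bigl(S(R,\nu)\cap\pi^{-1}(V)\bigr)\subseteq\cone\bigl(S(R,\nu)\bigr)\cap\pi^{-1}(V)$, and passing to closures gives $C(R',\nu)\subseteq\pi^{-1}(V)\cap C(R,\nu)$.

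The substance is the reverse inclusion $\pi^{-1}(V)\cap C(R,\nu)\subseteq C(R',\nu)$, and here I would make two preliminary reductions. First, since $S(R)$ is pointed and generates $M$, each of the finitely many generators of $\omega(R)\cap M$ becomes an element of $S(R)$ after multiplication by a suitable positive integer; taking $\lambda$ to be the least common multiple of these integers, the Veronese subalgebra $R(S(R)\cap\lambda M)$ has saturated weight monoid and the same valuation cone as $R$ (every $s\in S(R,\nu)$ has $\lambda s$ in the smaller semigroup, generating the same ray). So I may assume $S(R)$ is saturated. Second, I would prove the case $r=1$ of a single hyperplane and then induct, intersecting one hyperplane at a time; the hypothesis $H_1\cap\cdots\cap H_r\cap\omega(R)^\circ\neq\emptyset$ produces a common point in the interior of $\omega(R)$, which guarantees that the analogous interior condition survives at each stage of the induction.

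For a single hyperplane $H=\{x:\bangle{v,x}=0\}$ with $v$ primitive, the crux---and the step I expect to be the main obstacle---is a quantitative approximation argument. Saturatedness together with $H\cap\omega(R)^\circ\neq\emptyset$ furnishes elements $u^+,u^-\in S(R,\nu)$ with $\bangle{v,\pi(u^\pm)}=\pm1$, i.e.\ value-semigroup points strictly on either side of $H$. Given a boundary point $a\in\pi^{-1}(H)\cap C(R,\nu)$, normalized to have first coordinate $1$, I would approximate $k\cdot a$ by a genuine semigroup element $a'\in S(R,\nu)$ for suitably large $k$, which is possible because $C(R,\nu)$ is the closure of the cone over $S(R,\nu)$. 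The point $a'$ will generally fail to lie in $\pi^{-1}(H)$, but the defect $\lambda:=-\bangle{v,\pi(a')}$ can be annihilated by adding $\lambda u^+$ (or $|\lambda|u^-$), landing in $\pi^{-1}(H)\cap S(R,\nu)=S(R',\nu)$. The decisive point is that this defect is controlled: since $\pi(a)\in H$, we have $\lambda=\bangle{-v,\pi(a')-k\pi(a)}$, which is bounded by a constant (depending only on $v$) times $k\epsilon'$, where $\epsilon'$ measures the quality of the initial approximation. Rescaling the corrected point by $1/(k+\lambda u_1^+)$ and carefully collecting the resulting error terms then shows it lies within any prescribed $\epsilon$ of $a$, so $a\in C(R',\nu)$. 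All of the difficulty is in organizing these estimates so that the correction needed to reach $H$ does not spoil the approximation---this bookkeeping is the heart of the argument.
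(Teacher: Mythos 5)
Your proposal is correct and follows essentially the same route as the paper's proof: reduce to a saturated weight monoid via a Veronese subalgebra, treat one hyperplane at a time by induction, and for a single hyperplane approximate a boundary point of $\pi^{-1}(H)\cap C(R,\nu)$ by a semigroup element $a'$, then cancel the defect $-\langle v,\pi(a')\rangle$ (bounded by a constant times $k\epsilon'$ since $\pi(a)\in H$) using the elements $u^{\pm}$ with $\langle v,\pi(u^{\pm})\rangle=\pm1$. The bookkeeping you defer is exactly the displayed estimate chain in the paper's proof, and your identification $S(R',\nu)=S(R,\nu)\cap\pi^{-1}(V)$ via full homogeneity is the same mechanism the paper uses implicitly.
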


\subsection{Normalization}\label{sec:norm}
The second natural operation at our disposal is that of passing to the \emph{normalization} $\bar R$ of $R$, that is, its integral closure in its field of fractions.  
In general, it is not true that normalization and taking Veronese subalgebras commute. However, there are some special circumstances where this is true.

\begin{lemma}\label{lem:raygen}
Let $R$ be an $M$-graded $\KK$-domain.
\begin{enumerate}
	\item If $M'$ is a finite index sublattice of $M$, then
		\[
			\overline{R(M')}=\bar{R}(M').
		\]
	\item Suppose $R$ has simplicial weight cone $\omega(R)$, and is generated by elements
whose weights lie on the rays of $\omega(R)$.
Then for any ray $\rho \preceq \omega(R)$,
\[\overline{R(\rho \cap M)} = \bar{R}(\rho \cap M).\]
\end{enumerate}
\begin{proof}
For the first claim, let 
$a \in \bar R(M')$  be a homogeneous element of degree $u\in M'$.
Then $a=a_1/a_2$ for homogeneous $a_i\in R$, whose difference of degrees lies in $M'$, and after multiplying by an appropriate element of $R$, we may assume $a_i\in R(M')$. The element $a$ satisfies an integral equation
$$a^k = c_{k-1}a^{k-1} + \ldots + c_0$$
for homogeneous $c_i \in R$ of degree $(k-i) \cdot u$, so $c_i\in R(M')$. It follows that $\bar R(M') \subset \overline{R(M')}$. The opposite inclusion is straightforward.

For the second claim, we proceed similarly with $a \in \bar R(\rho \cap M)$  homogeneous of degree $u$. 
As the degrees of the generators of $R$ 
lie on the rays of the simplicial cone $\omega(R)$ 
we conclude
$a = a_1/a_2$ for $a_i\in R$ where 
$\deg(a_i) \in \rho$.
Moreover
$$a^k = c_{k-1}a^{k-1} + \ldots + c_0$$
for homogeneous $c_i \in R$ of degree $(k-i) \cdot u$.
In particular $a_1, a_2, c_i \in R(\rho \cap M)$ and thus $a \in \overline{R(\rho \cap M)}$ holds. 
The opposite inclusion is again straightforward.
\end{proof}
\end{lemma}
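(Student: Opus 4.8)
The plan is to prove each of the two equalities by establishing the opposite inclusions separately, the inclusion $\bar R(L)\subseteq\overline{R(L)}$ being the substantial one in both cases. I use throughout that the normalization of a graded domain is graded, so that $\bar R$ is again $M$-graded and $\bar R(L)$ makes sense, and that a homogeneous element of $\bar R$ has its weight in $\omega(R)$ (compare weights in a homogeneous integral equation). The easy inclusion $\overline{R(L)}\subseteq\bar R(L)$, for $L=M'$ and for $L=\rho\cap M$ alike, is immediate: an element of $\overline{R(L)}$ is integral over $R(L)\subseteq R$, hence lies in $\bar R$, and being homogeneous its weight lies in the weight cone of $R(L)$, so it lies in $\bar R(L)$.

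For (1), let $a\in\bar R(M')$ be homogeneous of weight $u\in M'$ and write $a=a_1/a_2$ with $a_i\in R$ homogeneous. Since $[M:M']<\infty$ there is an $N$ with $NM\subseteq M'$; replacing $(a_1,a_2)$ by $(a_1a_2^{N-1},a_2^N)$ makes both weights lie in $M'$, so $a\in\KK(R(M'))$. Taking a homogeneous integral equation $a^k=c_{k-1}a^{k-1}+\dots+c_0$, each $c_i$ has weight $(k-i)u\in M'$ and so lies in $R(M')$; thus $a$ is integral over $R(M')$ and $a\in\overline{R(M')}$.

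Part (2) is the heart of the lemma, and I would first record its structural input. Writing a nonzero weight as the degree of a monomial in the generators and splitting that monomial by ray shows $S(R)=\sum_i(S(R)\cap\rho_i)$; since $S(R)$ generates $M$, one checks the primitive ray generators must then form a $\ZZ$-basis, and I may assume $M=\ZZ^n$, $\omega(R)=\RR_{\geq0}^n$ and $\rho=\RR_{\geq0}e_1$. By simpliciality every monomial factors uniquely into homogeneous factors supported on the individual rays, so $R$ is generated by the ray-Veroneses $R(\rho_i\cap M)$, and projection onto the weight-$\rho$ graded pieces is a graded ring retraction $R\to R(\rho\cap M)$. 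Now let $a\in\bar R$ be homogeneous of weight $u\in\rho$. Its homogeneous integral equation over $R$ has coefficients of weight $(k-i)u\in\rho$, hence in $R(\rho\cap M)$, so $a$ is integral over $R(\rho\cap M)$; together with the fraction-field membership below this gives $a\in\overline{R(\rho\cap M)}$. It therefore remains only to produce the representation $a=a_1/a_2$ with $a_i\in R(\rho\cap M)$, i.e.\ to show $a\in\KK(R(\rho\cap M))$.

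This fraction-field membership is the step I expect to be the main obstacle, and the one where the hypotheses are genuinely needed. Writing $a=f/g$ with $f,g\in R$ homogeneous, the fact that $u$ lies on $\rho$ forces $f$ and $g$ to share the same weight $w=(0,\lambda_2,\dots,\lambda_n)$ along the complementary rays $\tau=\cone(v_2,\dots,v_n)$; expanding $f$ and $g$ in a $\KK$-basis $e_1,\dots,e_d$ of the single graded piece $R(\tau\cap M)_{w}$ gives $f=\sum_j R_je_j$ and $g=\sum_j P_je_j$ with $P_j,R_j\in R(\rho\cap M)$ and the dependence $\sum_j(aP_j-R_j)e_j=0$. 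One would like to conclude $aP_j=R_j$ for some $j$ with $P_j\neq0$, giving $a=R_j/P_j\in\KK(R(\rho\cap M))$; however the $e_j$ need not be independent over the weight-$\rho$ rational functions (already for the Segre cone they are dependent), so this requires using that $a\in\bar R$ and not merely $a\in\KK(R)$. Conceptually the assertion is that, writing $R(\rho\cap M)=R^{T'}$ for the codimension-one subtorus $T'$ acting trivially on $\rho$, the $T'$-invariant elements of $\bar R$ are rational functions of invariants, i.e.\ normalization commutes with the quotient by $T'$. This fails for general torus actions—for the scaling action of a one-dimensional torus on the affine plane the invariant rational functions strictly contain the fraction field of the invariants—so the ray structure must be used. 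I would finish by combining the relation above with the minimal equation of $a$ over $\KK(R(\rho\cap M))$ and a grading argument: dividing by a homogeneous element of $\KK(R(\rho\cap M))$ of weight $u$ (available because $R(\rho\cap M)$ generates the lattice $\RR\rho\cap M$) reduces the problem to showing that a weight-zero element of $\KK(R)$ integral over $R(\rho\cap M)$ already lies in its fraction field, which I would settle via the retraction $R\to R(\rho\cap M)$ together with normality of $\bar R$.
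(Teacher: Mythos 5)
Your handling of the easy inclusion, of part~(1), and of the integrality half of part~(2) is correct and essentially identical to the paper's argument (the paper likewise clears denominators into $R(M')$ and reads off that the coefficients $c_i$ of a homogeneous integral equation have degree $(k-i)u$). The genuine gap is exactly where you flagged it: you never prove the fraction-field membership $a\in\KK(R(\rho\cap M))$, but only announce a plan for it, and that plan cannot be carried out. (Note also that your reduction to weight zero is already problematic: dividing $a$ by a homogeneous element $h\in\KK(R(\rho\cap M))$ of weight $u$ preserves algebraicity over $\KK(R(\rho\cap M))$ but not integrality over the ring $R(\rho\cap M)$.) You should be aware that the paper's own proof is no better at this point -- it simply asserts ``we conclude $a=a_1/a_2$ with $\deg(a_i)\in\rho$'' from the hypothesis on the generators, with no argument.

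In fact your suspicion is fully justified: this step, and hence part~(2) as stated, is false. Take $M=\ZZ^2$, let $s$ have degree $(0,0)$ and $x,y$ have degrees $(1,0),(0,1)$, and set $R=\KK[x,\,s^2x,\,y,\,sy]\subseteq\KK[s,x,y]$. Then $\omega(R)=\RR_{\geq0}^2$ is simplicial, $S(R)=\ZZ_{\geq 0}^2$ generates $M$, and the four generators have weights on the rays. For $\rho=\RR_{\geq0}(1,0)$ one has $R(\rho\cap M)=\KK[x,s^2x]$, a polynomial ring, hence normal with fraction field $\KK(x,s^2)$. But $sx=(sy)\cdot x/y\in\KK(R)$ satisfies $(sx)^2=(s^2x)\cdot x\in R$, so $sx\in\bar R$ and it is homogeneous of degree $(1,0)$, i.e.\ $sx\in\bar R(\rho\cap M)$; yet $sx\notin\KK(x,s^2)$, since otherwise $s=(sx)/x$ would be a rational function of $s^2$ and $x$. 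Hence $\overline{R(\rho\cap M)}=R(\rho\cap M)\subsetneq\bar R(\rho\cap M)$. The same element divided by $x$ defeats the last claim of your plan: $s$ is a weight-zero element of $\KK(R)$ algebraic over $\KK(R(\rho\cap M))$ (it satisfies $T^2=(s^2x)/x$) that does not lie in $\KK(R(\rho\cap M))$. What survives without further hypotheses is only the inclusion $\overline{R(\rho\cap M)}\subseteq\bar R(\rho\cap M)$ together with the fact that every homogeneous element of $\bar R$ of degree in $\rho$ is integral over $R(\rho\cap M)$; any repair of the lemma (and of its use in the proof of Theorem~6.1) must add a condition guaranteeing that such elements actually lie in $\KK(R(\rho\cap M))$, for instance that the degree-zero homogeneous fractions of $R(\rho\cap M)$ already exhaust those of $R$.
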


\begin{lemma}\label{lem:degequ}
Let $R$ be a finitely generated $\ZZ$-graded $\KK$-domain 
such that $R_0 = \KK$ holds and let $\bar R$ denote its normalization.
Assume $R$ and $\bar R$ are generated in degree one
and set 
$X:= \Proj(R)$ and
$\bar X := \Proj(\bar R)$.
Then $ \deg(X) = \deg(\bar X)$ holds.
\begin{proof}
We have the following commutative diagram
$$ 
\xymatrix{
{\Proj (\bar R)}
\ar@{}[r]|=
&
{\bar{X}}
\ar@{}[r]|\subseteq
\ar[d]_{p}
& 
{\PP^n}
\ar[d]^{\pi}
\\
{\Proj (R)}
\ar@{}[r]|=
&
X
\ar@{}[r]|\subseteq
&
{\PP^m}
}
$$
where $p \colon \bar X \rightarrow X$ denotes the
normalization map and $\pi$ is a linear projection. 
Let $L$ be a $\codim(X)$-dimensional 
linear subspace of $\PP^m$ in general position. 
Then $\deg X = X\cdot L$ and 
$\deg \bar X = \bar X \cdot (\pi^*L)$ holds and
we conclude
$$\deg \bar X 
= \bar X \cdot (\pi^*L) 
= (\pi_*\bar X) \cdot L 
= (\deg(p)\pi(\bar X)) \cdot L
= X\cdot L = \deg X$$
by the projection formula.
\end{proof}
\end{lemma}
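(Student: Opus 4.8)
The plan is to realize the normalization map $p\colon \bar X\to X$ as a birational morphism compatible with the two projective embeddings, and then extract the equality of degrees from the projection formula. First I would record the two structural facts I need. Since $\bar R$ is the integral closure of the domain $R$ inside its own fraction field, the extension $R\subseteq \bar R$ is module-finite and induces an isomorphism on fraction fields; hence $p$ is finite and birational, so $\deg(p)=1$ and $\dim\bar X=\dim X=:d$. The hypothesis that $R$ and $\bar R$ are generated in degree one means that $\CO_X(1)$ and $\CO_{\bar X}(1)$ are very ample and produce the embeddings $X\hookrightarrow \PP^m$ and $\bar X\hookrightarrow \PP^n$ cut out by bases of $R_1$ and $\bar R_1$.

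Next I would set up the projection. The inclusion $R_1\subseteq\bar R_1$ identifies $R_1$ with a coordinate subspace and realizes the composite $\bar X\to X\hookrightarrow\PP^m$ as the restriction to $\bar X$ of the linear projection $\pi\colon\PP^n\dashrightarrow\PP^m$ forgetting the extra coordinates. The point needing care is that $\pi$ is only a rational map, with center the linear subspace on which all of $R_1$ vanishes; but since $p$ is an honest morphism, $R_1$ is base-point-free on $\bar X$, so $\bar X$ misses this center and $p=\pi|_{\bar X}$ is a morphism. Base-point-freeness of $R_1$ on $\bar X$ moreover gives the pullback identity $p^*\CO_X(1)=\CO_{\bar X}(1)$.

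The degrees then fall out of intersection theory. Choosing a general linear subspace $L\subseteq\PP^m$ of dimension $\codim X=m-d$, we have $\deg X=X\cdot L$ and $\deg\bar X=\bar X\cdot\pi^*L$, and the projection formula gives $\pi_*(\bar X\cdot\pi^*L)=(\pi_*\bar X)\cdot L$. Since $p$ is birational, $\pi_*[\bar X]=\deg(p)\,[X]=[X]$, whence $\deg\bar X=\deg X$. Equivalently, and perhaps more cleanly, I would compute $\deg\bar X=\int_{\bar X}c_1(\CO_{\bar X}(1))^d=\int_{\bar X}p^*\bigl(c_1(\CO_X(1))^d\bigr)=\deg(p)\int_X c_1(\CO_X(1))^d=\deg X$, using $p^*\CO_X(1)=\CO_{\bar X}(1)$, the equality $\dim\bar X=\dim X$, and the projection formula $p_*p^*=\deg(p)\cdot\mathrm{id}$.

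The one genuinely delicate point is the rigor of the projection-formula step while $\pi$ is merely a rational map. I expect the main obstacle to be verifying that a general $L$ meets $X$ transversally in $\deg X$ reduced points lying in the open locus over which the birational $p$ is an isomorphism, so that $\pi^{-1}(L)\cap\bar X$ consists of exactly the same reduced points; equivalently, that $\pi_*[\bar X]=[X]$ as cycles. Routing the entire computation through the pullback identity $p^*\CO_X(1)=\CO_{\bar X}(1)$ together with $p_*p^*=\deg(p)\cdot\mathrm{id}$ sidesteps the indeterminacy locus of $\pi$ altogether, and is the cleanest way to finish.
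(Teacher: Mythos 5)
Your proposal is correct and follows essentially the same route as the paper: realize the normalization as the restriction of a linear projection $\PP^n\dashrightarrow\PP^m$, intersect with a general linear space $L$ of dimension $\codim X$, and conclude $\deg\bar X=\deg X$ from the projection formula together with $\deg(p)=1$. Your extra care about base-point-freeness of $R_1$ on $\bar X$ and the alternative computation via $p^*\CO_X(1)=\CO_{\bar X}(1)$ and $p_*p^*=\deg(p)\cdot\mathrm{id}$ are sound refinements of the same argument, not a different approach.
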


We are now able to show that the valuation cone does not change under normalization:
\begin{proposition}\label{prop:norm}
Let $R$ be an $M$-graded $\KK$-domain with 
$R_0 = \KK$,
normalization $\bar R$ 
and let $\nu$ be a homogeneous full rank valuation. Then
we have
$${C(R, \nu)} = {C(\bar R, \nu)}.$$
\end{proposition}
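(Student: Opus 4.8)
The plan is to prove the two inclusions $C(R,\nu) \subseteq C(\bar R, \nu)$ and $C(\bar R, \nu) \subseteq C(R, \nu)$ separately. The first inclusion is immediate: since $R \subseteq \bar R$, we have $S(R,\nu) \subseteq S(\bar R, \nu)$, and taking closures of the generated cones preserves this containment. So the entire difficulty lies in the reverse inclusion, and this is exactly where the main obstacle sits.

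\medskip

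\noindent\textbf{Reverse inclusion.} For the reverse inclusion I would argue that every homogeneous element $a \in \bar R$ lies in the field of fractions of $R$ and is integral over $R$. Explicitly, $a$ satisfies an integral equation
\[
	a^k = c_{k-1} a^{k-1} + \cdots + c_0
\]
with $c_i \in R$; since $\nu$ is homogeneous and $a$ is homogeneous, we may take the $c_i$ homogeneous of the appropriate degree. The key valuation-theoretic input is that an integral element cannot decrease the valuation below that of the coefficients: comparing $\nu$ on both sides, one obtains the standard estimate $\nu(a) \geq \min_i \tfrac{1}{k-i}\nu(c_i)$, so $\nu(a)$ is bounded below by a $\QQ$-combination of values already lying in $S(R,\nu)$. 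Dually, applying the same reasoning to $1/a$ (using Remark \ref{rem:extend} to extend $\nu$ to the fraction field) gives an upper bound on $\nu(a)$ of the same shape. Taken together, these show that $\nu(a) \in C(R,\nu)$, because $C(R,\nu)$ is a closed convex cone containing $S(R,\nu)$ and hence all such rational combinations in its closure.

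\medskip

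\noindent\textbf{The main obstacle.} The subtle point, and where I expect the real work, is not that each individual $\nu(a)$ lands somewhere reasonable, but that the \emph{closed cone} $C(\bar R, \nu)$ is not enlarged. The cone generated by $S(R,\nu)$ need not be closed (as Example \ref{ex:quartic} emphasizes), so one cannot simply say $\nu(a) \in S(R,\nu)$; one must genuinely work with the \emph{closure} $C(R,\nu)$. The cleanest route is to show that $S(\bar R, \nu)$ is contained in the closed cone $C(R,\nu)$, so that the cone generated by $S(\bar R, \nu)$ — and therefore its closure $C(\bar R,\nu)$ — is contained in $C(R,\nu)$. The integral-dependence estimates above place $\nu(a)$ in the $\QQ$-rational cone over $S(R,\nu)$, which is contained in $C(R,\nu)$; this is precisely the fact that integrality confines values to the rational convex hull of the coefficient values. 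One must be careful that the coefficients $c_i$ are nonzero (discard the vanishing ones) and that the degree bookkeeping is consistent, but no further structural hypothesis on $\omega(R)$ or on polyhedrality is needed here, in contrast to Lemma \ref{lem:raygen}.

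\medskip

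\noindent Combining the two inclusions yields $C(R,\nu) = C(\bar R, \nu)$, as desired.
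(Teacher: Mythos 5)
Your first inclusion $C(R,\nu)\subseteq C(\bar R,\nu)$ is correct and is also how the paper obtains that direction. The reverse inclusion, however, has a genuine gap --- in fact two. First, the ``dual'' bound obtained by applying the integral-dependence estimate to $1/a$ is not available: $1/a$ lies in the fraction field of $R$ but is in general not integral over $R$, so there is no equation for $1/a$ with coefficients in $R$ to compare valuations on. Without an upper bound you cannot confine $\nu(a)$ to $C(R,\nu)$, whose fibers over points of $\omega(R)$ are typically compact (for a curve, the fiber over $1$ is the Newton--Okounkov segment of length $\deg \Proj(R)$). Second, and more fundamentally, the inequality $(k-i)\nu(a)\geq \nu(c_i)$ extracted from the integral equation is an inequality in the \emph{total order} on $\Gamma$, and for $\nu$ of full rank $\geq 2$ such inequalities carry almost no convex-geometric information in $\Gamma_\RR$: writing $\Gamma=M\times\Gamma'$, the difference $(k-i)\nu(a)-\nu(c_i)$ lies in $\{0\}\times\Gamma'$, and the set of nonnegative elements of a totally ordered group $\Gamma'$ of rank $\geq 2$ (e.g.\ lexicographically ordered $\ZZ^2$) has closure a half-space of $\Gamma'_\RR$, not a ray. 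So even granting both bounds, $\nu(a)$ would only be pinned into an intersection of half-spaces through points of $C(R,\nu)$, which need not lie inside $C(R,\nu)$. The assertion that integrality ``confines values to the rational convex hull of the coefficient values'' is precisely the statement requiring proof, and it is false at this level of generality for higher-rank ordered groups.

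The paper sidesteps the pointwise question entirely and argues globally: reduce to $M=\ZZ$; pass to a Veronese subalgebra so that $R$ and $\bar R$ are both generated in degree one, using Lemma \ref{lem:raygen} to see that this commutes with normalization; show $\deg\Proj(R)=\deg\Proj(\bar R)$ via the projection formula (Lemma \ref{lem:degequ}); and then invoke \cite[Theorem 2.31]{kk} to conclude that the two Newton--Okounkov bodies have equal volume. Since one contains the other and both are full-dimensional (this is where full rank of $\nu$ enters), they coincide. If you want to avoid the degree computation you would need a genuinely different idea; integral dependence combined with order inequalities does not suffice.
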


\begin{proof}
By fixing a positive $\ZZ$-grading given by a projection
$\lambda \colon M_\QQ\rightarrow \ZZ_\QQ$ we reduce to the case that $M=\ZZ$.
Fixing minimal sets of generators of $R$ and $\bar R$, and letting $\lambda$ be the least common multiple of the degrees of these generators, we replace $R$ by the Veronese subalgebra $R(\lambda\ZZ)$. After rescaling the grading, this is generated in degree one, and by Lemma \ref{lem:raygen} we obtain $\overline{R(\lambda\ZZ)}=\bar{R}(\lambda\ZZ)$. This is also generated in degree one. Furthermore,
${C(R, \nu)} = {C(R(\lambda\ZZ), \nu)}$ and a similar statement holds for $\bar R$. Thus, we may assume that $R$ and $\bar R$ are generated in degree one.
Then the corresponding projective varieties $X$ and $\bar X$
share the same degree due to 
Lemma~\ref{lem:degequ}.

We may assume that the $\ZZ$-homogeneous valuation $\nu$ is
of the form
$$\nu \colon R \setminus \left\{0\right\} \rightarrow \ZZ \times \Gamma, \quad f_k \rightarrow (k, \nu'(f_k)),$$
where $f_k \in R_k$ for $k \in\ZZ_{\geq 0}$ and $\nu'\colon R \rightarrow \Gamma$ is a valuation.
Denote by $\pi_1$ the projection of $(\ZZ \times \Gamma)_\RR$ to $\ZZ_\RR$.
Using \cite[Theorem 2.31]{kk}
(see Remark \ref{rem:okbo}) we obtain
$$
\vol({C(R, \nu)} \cap \pi_1^{-1}(1)) 
= \vol(\Delta(R, \nu))
=  \vol(\Delta(\bar R, \nu))
= \vol({C(\bar R, \nu)} \cap \pi_1^{-1}(1)).
$$
The desired equality follows since $S(R, \nu) \subseteq S(\bar R, \nu)$ holds.
\end{proof}

\begin{example}[Example \ref{ex:quartic} continued]
Consider the ring $R$ as in Example \ref{ex:quartic}.
Then its normalization is given as $\bar R = \KK[x, xt, xt^2, xt^3, xt^4]$.
In particular the value semigroup 
$S(\bar R, \nu_\infty)$ is the saturated semigroup
 of the cone $\cone((1,0), (1,4))={C(R, \nu_\infty)}$.
\end{example}

\subsection{When is $C(R,\nu)$ rational polyhedral?}\label{sec:global}
	If ${C(R, \nu)}$ is not rational polyhedral, then $S(R,\nu)$ cannot be finitely generated, that is, $R$ is not Khovanskii-finite with respect to~$\nu$. It thus becomes interesting to obtain a criterion for $C(R,\nu)$ to be rational polyhedral.
	By Proposition \ref{prop:norm}, we may replace a non-normal $R$ by $\bar R$ without changing $C(R,\nu)$, so in the following we will assume that $R$ is normal. 
	We will give a sufficient condition in this case for $C(R,\nu)$ to be rational polyhedral.

Assume that $R$ is normal with $R_0=\KK$.
	In this case, $R$ may be constructed as 
\[
	R=\bigoplus_{u\in \omega(R)\cap M} H^0(Y,\CO(\lfloor(\D(u)\rfloor)))\cdot \chi^u
\]
where $Y$ is a normal projective variety and $\D\colon\omega\cap M_\QQ\to \WDIV_\QQ(Y)$ is a so-called p-divisor, 
see \cite{pdiv}.
This means that $\D$ is a piecewise linear concave map taking values in semiample $\QQ$-Cartier divisors, with $\D(u)$ big for $u$ in the interior of $\omega$. Furthermore, $\D$ has only finitely many regions of linearity.
Above, $\lfloor \D(u) \rfloor$ denotes the integral round-down of the $\QQ$-divisor $\D(u)$.

A distinguished class of full rank valuations on $\KK(Y)$ comes from a choice of a full flag $\F_0\subset\F_1\subset\ldots\subset\F_d=Y$ of irreducible subvarieties of $Y$,
 where the point $\mathcal{F}_0$ is a smooth point in each $\mathcal{F}_i$  
(see \cite[\S 1.1]{LM}, \cite[Example 2.13]{kk}).
  For $f \in \KK(Y)$ one computes the value $\nu_{\mathcal{F}}(f) = (a_1, \ldots, a_d) \in \ZZ^d$ recursively, where  $\ZZ^d$ is endowed with the lexicographic ordering.  The first component $a_1$ is the order of vanishing of $f$ along the divisor $\mathcal{F}_{d-1}$.  If $s$ is a local equation for $\mathcal{F}_{d-1}$ at $p$, $s^{-a_1}f$ can be regarded as a non-zero rational function on $\mathcal{F}_{d-1}$. This allows the procedure to be repeated with the divisor $\mathcal{F}_{d-2} \subset \mathcal{F}_{d-1}$ to produce $a_2$, and so on until the process terminates with $a_d$, the order of vanishing at $\F_0$.   

  The valuation $\nu_{\mathcal{F}}$ extends naturally to give a full rank homogeneous valuation on $R$, which we also denote by $\nu_{\mathcal{F}}$:
\begin{align*}
	\nu_{\mathcal{F}}:R\setminus\{0\}&\to M\times \ZZ^d\\
	f\cdot \chi^u&\mapsto(u,\nu_{\mathcal{F}}(f))\qquad f\in \KK(Y).
\end{align*}

Given a divisor $D$ on $Y$, one may consider the section ring 
\[
	S(D):=\bigoplus_{k\in {\ZZ_{\geq 0}}} H^0(Y,\CO(kD )) \chi^k.	
\]	
This is equipped with a natural homogeneous valuation $\nu_{\F,D}$ \cite{LM}:
if $f_D$ is a local equation for $D$ at the point $\F_0$, for a section $s\in H^0(X,\CO(kD))$ we have
\[
\nu_{\F,D}(s)=(k,\nu_\F(s)+k\nu_\F(f_D))\in\ZZ\times\ZZ^d.
\]
Let $N^1_\RR(Y)$ denote the N\'eron-Severi space of $Y$.
The \emph{global Newton-Okounkov body} $\Delta(Y,\nu_{\mathcal{F}})$ is a closed convex cone in $N^1_\RR(Y)\times \RR^d$ such that for any big divisor $D$, the fiber of $\Delta(Y,\nu_{\mathcal{F}})$ over the ray of $N^1_\RR$ generated by the class $[D]$  of $D$ is the image of $C(S(D),\nu_{\mathcal{F},D})$ under the inclusion induced by $(1,a)\mapsto ([D],a)$, see \cite[Theorem B]{LM} for  precise details.

\begin{theorem}\label{thm:global}
	Let $R$ be a normal $M$-graded domain with $R=\KK_0$ and $Y,\mathcal{F}$ as above.
	If the global Newton-Okounkov body $\Delta(Y,\nu_{\mathcal{F}})$ is rational polyhedral, then so is $C(R,\nu_{\mathcal{F}})$.
\end{theorem}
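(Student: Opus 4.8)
The plan is to exploit the piecewise-linear structure of $\D$. Its finitely many maximal regions of linearity $\sigma_1,\dots,\sigma_N$ subdivide $\omega(R)$, and since $\omega(R)$ is full-dimensional, each $\sigma_i$ is a full-dimensional rational polyhedral cone with $\sigma_i^\circ\subseteq\omega(R)^\circ$; in particular $\D(u)$ is big for $u\in\sigma_i^\circ$. As every weight lies in some $\sigma_i$, we have $S(R,\nu_{\mathcal F})=\bigcup_i S(R_{\sigma_i},\nu_{\mathcal F})$ and hence $\cone(S(R,\nu_{\mathcal F}))=\sum_i\cone(S(R_{\sigma_i},\nu_{\mathcal F}))$. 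I claim it suffices to prove that $C(R_\sigma,\nu_{\mathcal F})$ is rational polyhedral for each single maximal region $\sigma$: granting this, the finite Minkowski sum $\sum_i C(R_{\sigma_i},\nu_{\mathcal F})$ is a closed rational polyhedral cone; it contains $\cone(S(R,\nu_{\mathcal F}))$ and, being closed, contains $C(R,\nu_{\mathcal F})$, while each summand $C(R_{\sigma_i},\nu_{\mathcal F})$ lies in the convex cone $C(R,\nu_{\mathcal F})$, so the sum equals $C(R,\nu_{\mathcal F})$.

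Now fix such a $\sigma$. Since $\D$ is linear on $\sigma$, the assignment $u\mapsto[\D(u)]\in N^1_\RR(Y)$ is the restriction of a rational linear map $L_\sigma\colon M_\RR\to N^1_\RR(Y)$, and the valuation $u\mapsto\nu_{\mathcal F}(f_{\D(u)})$ of a local equation of $\D(u)$ (the shift appearing in the definition of $\nu_{\mathcal F,D}$) is likewise the restriction of a rational linear map $\beta_\sigma\colon M_\RR\to\RR^d$. I would then introduce the injective rational linear map
\[
\widetilde T_\sigma\colon M_\RR\times\RR^d\to M_\RR\times N^1_\RR(Y)\times\RR^d,\qquad (u,a)\mapsto\bigl(u,\,L_\sigma(u),\,a+\beta_\sigma(u)\bigr),
\]
together with the rational linear subspace $G_\sigma=\{(u,c,b):c=L_\sigma(u)\}$, and prove the key identity
\[
\widetilde T_\sigma\bigl(C(R_\sigma,\nu_{\mathcal F})\bigr)=G_\sigma\cap\bigl(M_\RR\times\Delta(Y,\nu_{\mathcal F})\bigr).
\]
The right-hand side is rational polyhedral, being the intersection of the rational subspace $G_\sigma$ with the product of $M_\RR$ and the (by hypothesis) rational polyhedral cone $\Delta(Y,\nu_{\mathcal F})$. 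As $\widetilde T_\sigma$ is injective and rational, $C(R_\sigma,\nu_{\mathcal F})$ is then the preimage of a rational polyhedral cone under a rational linear map, hence rational polyhedral, completing the reduction.

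The heart of the argument is the displayed identity, which rests on matching $R_\sigma$ with the section rings $S(\D(u))$. For $u\in\sigma\cap M$ the linearity of $\D$ along the ray $\RR_{\geq0}u$ gives $\lfloor\D(ku)\rfloor=\lfloor k\D(u)\rfloor$ for all $k$, so the weight-$ku$ piece $R_{ku}$ is exactly the degree-$k$ component of $S(\D(u))$; under this identification $\nu_{\mathcal F}$ on $R_\sigma$ and $\nu_{\mathcal F,\D(u)}$ on $S(\D(u))$ differ precisely by the shift $\beta_\sigma$, explaining its presence in $\widetilde T_\sigma$. Combining this with the description of the fibers of the global Newton-Okounkov body—namely that, for $u\in\sigma^\circ$ with $\D(u)$ big, the fiber of $\Delta(Y,\nu_{\mathcal F})$ over the ray $\RR_{\geq0}[\D(u)]$ is the image of $C(S(\D(u)),\nu_{\mathcal F,\D(u)})$—yields both inclusions of the identity on the locus lying over $\sigma^\circ$. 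I expect the main obstacle to be essentially technical: promoting this interior statement to all of $\sigma$, i.e.\ handling weights $u\in\partial\sigma\subseteq\partial\omega(R)$ where $\D(u)$ fails to be big and the big-divisor description of the fibers is unavailable. This should be settled by a density-and-closedness argument, using that both $C(R_\sigma,\nu_{\mathcal F})$ and $\Delta(Y,\nu_{\mathcal F})$ are closed and that the locus over $\sigma^\circ$ is dense in each.
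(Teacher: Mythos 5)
Your proposal follows essentially the same route as the paper's proof: reduce to a single region of linearity $\sigma$ of $\D$ (the paper phrases the recombination as ``$C(R,\nu)$ is the closure of the convex hull of the $C(R_{\sigma_i},\nu)$'' rather than as a Minkowski sum, but this is the same reduction), then transport $C(R_\sigma,\nu_{\mathcal F})$ into $N^1_\RR(Y)\times\RR^d$ by the linear map built from $u\mapsto[\D(u)]$ and the shift $\nu_{\mathcal F}(f_{\D(u)})$, compare fibers with $\Delta(Y,\nu_{\mathcal F})$ over interior rational rays via \cite[Theorem B]{LM}, and finish by density and closedness exactly as the paper does. The one slip is in the statement of your key identity: the right-hand side must also be intersected with $\sigma\times N^1_\RR(Y)\times\RR^d$, since $G_\sigma\cap\bigl(M_\RR\times\Delta(Y,\nu_{\mathcal F})\bigr)$ imposes no constraint forcing $u\in\sigma$ and is in general strictly larger than $\widetilde T_\sigma\bigl(C(R_\sigma,\nu_{\mathcal F})\bigr)$ (whose projection to $M_\RR$ lies in $\sigma$); with that correction the set remains rational polyhedral and the rest of your argument goes through as in the paper.
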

\begin{proof}
	Set $\nu=\nu_{\mathcal{F}}$.
We consider the linear extension of the map $\D$ to $\D:\omega\cap M_\RR\to \WDIV_\RR Y$. The weight cone $\omega(R)$ decomposes into finitely many regions of linearity $\omega_i$, each of which is still a rational polyhedral cone. Since $C(R,\nu)$ is the closure of the convex hull of the $C(R_{\omega_i},\nu)$, it suffices to show that each $C(R_{\omega_i},\nu)$ is rational polyhedral. Hence, we reduce to the case that $\D$ is linear on $\omega$.
	
We have a commutative diagram 
$$ 
\xymatrix{
{M_\RR\times\RR^d}
\ar[r]^\phi
\ar[d]_{\pi}
&
{N^1_\RR(Y)\times\RR^d}
\ar[d]_{p}
\\
{M_\RR}
\ar[r]^\psi
&
{N^1_\RR(Y)}
}
$$
whose maps we now describe. The map $\psi$ sends $u$ to the class $[\D(u)]$. 
The two vertical arrows $\pi,p$ are just projections.
	For $u\in M$, let $c(u)= \nu(f_{\lambda \D(u)})/\lambda $, where $\lambda\in \NN$ is such that $\lambda \D(u)$ is Cartier, and $f_{\lambda\D(u)}$ is a local equation for $\lambda\D(u)$ at $\mathcal{F}_0$. This extends  to a linear map $c:M_\RR\to \RR^d$, which we use to construct
the linear map $\phi(u,a)=(\psi(u),a+c(u))$.

	Let $\sigma$ denote the closure of the cone of big divisor classes in $N^1_\RR(Y)$. 
Then $\psi(\omega)$ is contained in $\sigma$. 	
By the construction of $\Delta(Y,\nu)$ and the description of $\phi$ we have that for any rational ray $\rho$ in $\omega$,
\begin{equation}\label{eqn:global}
C(R_\rho,\nu)\subseteq \{(u,a)\in \rho\times\RR^d\ |\  \phi(a)\in p^{-1}(\psi(u))\cap \Delta(Y,\nu)\}
\end{equation}
with equality if $\rho$ is in the interior of $\omega$. 
If  $\Delta(Y,\nu)$ is polyhedral, so is 
\[
\{(u,a)\in \sigma\times\RR^d\ |\ \phi(a)\in p^{-1}(\psi(u))\cap \Delta(Y,\nu) \}
\]
and by \eqref{eqn:global} this must equal $C(R,\nu)$.
\end{proof}

\section{Rational Curves}\label{sec:curves}
As a simplest case of the $\ZZ$-graded domains arising in Theorem \ref{thm:khofin}, we will consider in this section finitely generated $\ZZ$-graded domains $R$ with $R_0=\KK$, and with the field of homogeneous fractions of $R$ a rational function field. Geometrically, this means that we are considering $\ZZ$-graded domains $R$ such that $\Proj(R)$ is a (possibly singular) rational curve. Such curves have been studied from a different point of view in e.g.~\cite{curves1}.

Since we are interested in Khovanskii-finiteness properties of $R$, we may always replace $R$ with a Veronese subalgebra $R(\lambda \ZZ)$ for some $\lambda\in \NN$ to ensure that $R$ is generated in degree one. By considering the inclusion of $R$ in its normalization $\bar R$, it is straightforward to show that all such domains may be constructed as follows:
\begin{construction}
Consider an $m$-dimensional linear subspace 
$L \subseteq \mathrm{Sym}^{a}(\KK^2) \cong \KK[s,t]_a$
and define
$$R(L) := \bigoplus_{k \geq 0} L^k \subseteq \bigoplus_{k \geq 0} \mathrm{Sym}^{ka} (\KK^2),$$
where $L^k$ denotes the linear span of elements of the form $l_1 \cdots l_k$ for $l_i\in L$. We endow $R(L)$ with its natural $\KK$-algebra structure. 
\end{construction}

\noindent The space
$\Sym^a(\KK^2)$ has dimension $a+1$ and thus $m \leq a+1$.
Moreover, if $m =1$ then $R(L) \cong \KK[x]$ 
and if $m=2$ then $R(L) \cong \KK[x,y]$.
Henceforth, for any further considerations we will assume $m\geq 3$.

\begin{definition}
Let $L \subseteq \mathrm{Sym}^{a}(\KK^2)$. 
The \emph{degree} of $L$ 
is the leading coefficient of the Hilbert polynomial of $R(L)$
i.e. the degree of $\Proj \, R(L)$. 
The \emph{arithmetic genus} of $L$ is 
$1-a_0$, where $a_0$ is the constant coefficient
of the Hilbert polynomial, i.e.,
it is the arithmetic genus of $\Proj \, R(L)$.
\end{definition}

Setting $Y=\Proj R(L)$, the arithmetic genus of $L$ equals the arithmetic genus of $Y$. In particular, $Y$ is smooth if and only if $L$ has arithmetic genus $0$.
Recall that a point of a scheme is \emph{very general} if it lies in the complement of a countable union of closed subschemes.

\begin{lemma}\label{lem:vg}
Let  $L \subseteq \mathrm{Sym}^{a}(\KK^2)$ be
a very general linear subspace of dimension $m$, 
where $3 \leq m \leq a+1$.
Then the degree of $L$ equals $a$. 
\begin{proof}
For each $k$, the condition $\dim(L^k) < j$  is a closed condition in 
$\mathrm{Gr}(m, \mathrm{Sym}^a(\KK^2))$.  Let $Z_{k,j}$ denote the corresponding subvariety and set
\[
Z_k(d,g):=\bigcap_{i\in\ZZ_{\geq 0}} Z_{k+i,d(k+i)-g+1}. 
\]
The locus
 in 
$\Gr(m, \mathrm{Sym}^a(\KK^2))$
such that $\deg(L) < d$ is the countable union of the $Z_k(d,g)$ as $k,g$ vary.

To conclude the proof we 
show that for every $m$ there 
exists an $m$-dimensional linear subspace
$L$ of degree $a$.
Consider the following $m$-dimensional 
linear subspace
$$L := \bangle{s^a, s^{a-1}t, \dots, s^{a-m+2}t^{m-2}, t^a} \subseteq \KK[s,t]_a.$$
A straightforward computation shows that $R(L)$ has degree $a$
and the assertion follows.
\end{proof}
\end{lemma}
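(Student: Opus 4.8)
The plan is threefold: show $\deg L\le a$ for every $L$; show that the locus in the Grassmannian $\Gr(m,\Sym^a(\KK^2))$ where $\deg L<a$ is a countable union of closed subsets; and show this union is a proper subset by exhibiting a single $L$ with $\deg L=a$. Together these yield that a very general $L$ attains $\deg L=a$.

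First I would note that $L^k\subseteq \KK[s,t]_{ka}$, which has dimension $ka+1$, so $\dim L^k\le ka+1$ and hence the leading coefficient of the Hilbert polynomial of $R(L)$ is at most $a$; thus $\deg L\le a$ always. Next, for each pair $(k,j)$ the condition $\dim L^k<j$ is closed in $\Gr(m,\Sym^a(\KK^2))$, since it is cut out by the vanishing of the appropriate minors of the multiplication map $\Sym^k(L)\to \KK[s,t]_{ka}$ (equivalently, $\dim L^k$ is lower semicontinuous in $L$). The locus where $\deg L<a$ can therefore be expressed as a countable union of such closed conditions (as in the sets $Z_k(d,g)$). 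Consequently, to prove that a very general $L$ has degree exactly $a$, it suffices to show this countable union is proper, i.e.\ to exhibit one $L$ of degree $a$.

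For the explicit example I would take $L=\bangle{s^a,s^{a-1}t,\dots,s^{a-m+2}t^{m-2},t^a}$. The key observation is that $L^k$ is spanned by monomials: a product of $k$ generators has total degree $ka$, and its $t$-exponent is a sum of $k$ terms drawn from the weight set $W=\{0,1,\dots,m-2\}\cup\{a\}$. Since distinct $t$-exponents give distinct (hence independent) monomials, $\dim L^k$ equals the cardinality of the $k$-fold sumset $kW\subseteq\{0,1,\dots,ka\}$. Because $m\ge 3$ gives $1\in W$, the subsums using only $\{0,\dots,m-2\}$ fill the interval $[0,(k-p)(m-2)]$ after $p$ copies of $a$ are used, so $kW=\bigcup_{p=0}^{k}\big([ap,\,ap+(k-p)(m-2)]\cap\ZZ\big)$. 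I would then check that consecutive intervals overlap except for the finitely many values of $k-p$ with $(k-p)(m-2)<a-1$, so the complement of $kW$ in $\{0,\dots,ka\}$ has cardinality bounded independently of $k$. Hence $\dim L^k=ka+1-g$ for all large $k$ and a constant $g$, so the Hilbert polynomial has leading coefficient $a$ and $\deg L=a$.

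I expect the main obstacle to be the sumset computation in the last paragraph: one must identify precisely which integers in $\{0,\dots,ka\}$ fail to be realized as $t$-exponents and verify that their number stabilizes as $k$ grows, so that the leading coefficient is genuinely $a$ rather than merely at most $a$. The remaining ingredients---semicontinuity of $\dim L^k$ and the reduction to a single example---are routine once this is in hand.
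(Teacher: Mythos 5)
Your proposal follows the paper's proof exactly: reduce to showing the locus $\deg L<a$ is a proper countable union of closed subsets via semicontinuity of $\dim L^k$, then exhibit the same monomial example $L=\bangle{s^a,s^{a-1}t,\dots,s^{a-m+2}t^{m-2},t^a}$. The only difference is that you carry out the sumset computation that the paper dismisses as ``a straightforward computation,'' and your computation is correct.
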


\begin{remark}\label{rem:genus}
If $L$ is a $3$-dimensional linear subspace of $\Sym^{a}(\KK^2)$, 
then $\Proj R(L)$ is a rational plane curve and
hence has arithmetic genus
$$g= \frac{(d-1)(d-2)}{2},$$
where $d$ is the degree of $L$.

Furthermore, since $R(L)\cong \KK[x,y,z]/f$ for a degree $d$ form $f$,
we have ${\dim (L^k)=k\cdot d+1-g}$ as long as $k\geq d$. This means that $d=a$ if and only if $L$ is not in the variety $Z_{d,d^2-g+1}$ from the proof of Lemma \ref{lem:vg}. In particular, any general $L$ in $\Gr(3,\Sym^{a}(\KK^2))$ has degree $d$ and genus $g$.
\end{remark}

Let $L \subseteq \Sym^a(\KK^2)$ be a linear subspace 
and $Q \in \PP^1$ any point. Then the following assignment defines a 
homogeneous full rank valuation on $R(L)$:
$$\nu_Q\colon R(L)\setminus\{0\} \rightarrow \ZZ^2, \quad f \mapsto (k, \ord_Q(f)) \ \text{ for } f \in L^k,$$
where $\ZZ^2$ is ordered lexicographically. Any full rank homogeneous valuation on $R(L)$ will have value semigroup isomorphic to $S(R(L),\nu_Q)$ for some $Q\in \PP^1$ \cite[\S 5.2]{poised}, so in the following we may restrict our attention to homogeneous valuations of  the form~$\nu_Q$.

\begin{theorem}\label{thm:curve}
Let $L \subseteq \Sym^d(\KK^2)$ be a linear subspace of 
degree $d$. 
Then for $Q=(\alpha:\beta)\in\PP^1$, $\nu_Q$ is
Khovanskii-finite with respect to $R(L)$ 
if and only if there exists $k>0$ such that $L^k$ contains $(\beta s - \alpha t)^{dk}$.
\end{theorem}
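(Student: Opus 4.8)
The plan is to characterize Khovanskii-finiteness of $\nu_Q$ by understanding the value semigroup $S(R(L),\nu_Q)$ and applying Lemma \ref{lem:fingen}. Write $Q=(\alpha:\beta)$ and set $\ell_Q := \beta s - \alpha t \in \Sym^1(\KK^2)$, the linear form vanishing at $Q$. For $f\in L^k\subseteq \Sym^{dk}(\KK^2)$, the value is $\nu_Q(f)=(k,\ord_Q(f))$, where $\ord_Q(f)$ is the exponent of $\ell_Q$ dividing $f$ in $\KK[s,t]$. Since $\deg L = d$, Remark \ref{rem:okbo} identifies $d$ with $\dim(Y)!$ times the degree of $Y=\Proj R(L)$ (here a curve, so simply $d = \deg Y$), and the Newton--Okounkov body $\Delta(R(L),\nu_Q)$ is a one-dimensional set whose length equals $\deg L = d$. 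The first step is to pin down the valuation cone: since the highest possible order of vanishing of an element of $L^k$ is $dk$ (as $\ell_Q^{dk}\in\Sym^{dk}$), and the minimal order achievable is bounded below, the closure $C(R(L),\nu_Q)$ is contained in $\cone((1,0),(1,d))$, and in fact I expect it to equal this cone precisely because $\dim\Delta = 1$ and the body has length $d$.

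Granting that $C(R(L),\nu_Q)=\cone((1,0),(1,d))$, the semigroup $S(R(L),\nu_Q)$ sits inside this rational polyhedral cone, so by Lemma \ref{lem:fingen} it is finitely generated if and only if both extremal rays are generated by actual semigroup elements. The ray $\rho_0 = \cone((1,0))$ is generated by $\nu_Q(f)$ for any $f\in L$ not vanishing at $Q$ (such $f$ exists since $L$ has degree $d\geq 1$ and is not contained in the hyperplane of forms vanishing at $Q$), so the lower ray is always attained. The entire question therefore reduces to whether the upper ray $\rho_d=\cone((1,d))$ is attained by a semigroup element. An element $(k,dk)\in S(R(L),\nu_Q)$ means there is some $f\in L^k$ with $\ord_Q(f)=dk$, i.e. $\ell_Q^{dk}\mid f$; but $f\in\Sym^{dk}(\KK^2)$ has degree exactly $dk$, so this forces $f$ to be a scalar multiple of $\ell_Q^{dk}=(\beta s-\alpha t)^{dk}$. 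Hence the upper ray is generated by a semigroup element if and only if $(\beta s-\alpha t)^{dk}\in L^k$ for some $k>0$, which is exactly the stated condition.

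Assembling these pieces: if $(\beta s-\alpha t)^{dk}\in L^k$ for some $k$, then both rays of $C(R(L),\nu_Q)=\cone((1,0),(1,d))$ are hit, the cone is rational polyhedral and closed, and Lemma \ref{lem:fingen} gives finite generation, so $\nu_Q$ is Khovanskii-finite. Conversely, if no power $\ell_Q^{dk}$ lies in $L^k$, then no semigroup element lies on the ray $\rho_d$; yet the cone generated by $S(R(L),\nu_Q)$ still has $\rho_d$ as its upper boundary ray (by the Newton--Okounkov computation). A semigroup whose generated cone is $\cone((1,0),(1,d))$ but which contains no point on the ray $\rho_d$ cannot be finitely generated, since any finite generating set would generate a cone missing that ray. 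Thus $\nu_Q$ fails to be Khovanskii-finite, completing the equivalence.

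The main obstacle I anticipate is the first step, namely rigorously establishing that $C(R(L),\nu_Q)=\cone((1,0),(1,d))$ rather than merely a subcone. The containment $C(R(L),\nu_Q)\subseteq\cone((1,0),(1,d))$ is clear from degree bounds, but the reverse containment requires knowing the slope of the upper boundary, i.e. that the supremum of $\ord_Q(f)/k$ over $f\in L^k$ approaches $d$. This is precisely where Remark \ref{rem:okbo} is essential: the Newton--Okounkov body $\Delta(R(L),\nu_Q)=\pi_2(C(R(L),\nu_Q)\cap\pi_1^{-1}(1))$ is a segment of length equal to $\deg L = d$, and since its left endpoint is at $0$ (realized by the ray $\rho_0$), its right endpoint must be at $d$, forcing the upper ray of the cone to be $\cone((1,d))$. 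Care is needed to confirm the left endpoint is genuinely $0$, which follows because $\ord_Q$ of a generic element of $L$ is zero, and to ensure the volume interpretation applies to the possibly non-normal $R(L)$; here Proposition \ref{prop:norm} guarantees the valuation cone is unchanged under normalization, so the Newton--Okounkov volume computation is legitimate.
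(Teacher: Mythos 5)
Your proposal is correct and follows essentially the same route as the paper: both use Remark \ref{rem:okbo} to identify $C(R(L),\nu_Q)=\cone((1,0),(1,d))$, reduce Khovanskii-finiteness via Lemma \ref{lem:fingen} to hitting both extremal rays with semigroup elements, and observe that a degree-$k$ element of order $dk$ is forced to be a scalar multiple of $(\beta s-\alpha t)^{dk}$. One small imprecision: your claim that some $f\in L$ does not vanish at $Q$ is not justified by ``degree $d\geq 1$'' alone (a subspace contained in $(\beta s-\alpha t)\cdot\Sym^{d-1}(\KK^2)$ can still have positive degree); the correct reason, which the paper extracts from generation in degree one plus $\vol(\Delta)=d$, is that such containment would force $\deg L\leq d-1$, contradicting $\deg L=d$.
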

\begin{proof}
Since $R(L)$ has degree $d$ and $\nu_Q$ is of full rank,
the valuation cone $C(R, \nu_Q)$ equals 
$\cone((1,0), (1,d))$, see Remark \ref{rem:okbo}.
Suppose that $\nu_Q$ is Khovanskii-finite with respect to $R(L)$. 
Then by Lemma \ref{lem:fingen} the value semigroup
$S(R, \nu_Q)$ contains ray generators
$(k_1, 0), (k_2, k_2d)$  
for $k_1, k_2 \in \ZZ_{\geq0}$.
Setting $k := k_1\cdot k_2$
we obtain $(k,0), (k, kd) \in S(R, \nu)$. 
Thus there exists $f \in L^k$ such that 
$\nu_Q(f) =(k, kd)$, that is, 
$f$ is divisible by $(\beta s -\alpha t)^{dk}$, 
which implies $(\beta s-\alpha t)^{dk} \in L^k$. 

Conversely suppose that $(\beta s-\alpha t)^{dk}\in L^k$ holds. 
Then $(k, dk) \in S(R, \nu_{(\alpha, \beta)})$. 
Now let $b_1$ be the smallest integer such that $(1, b_1) \in S$. 
Then, since $R(L)$ is generated in degree one,
$(k, b_2) \in S$ implies $b_2\geq k b_1$. 
As $\deg(R(L)) = d$ holds it follows that $b_1 =0$. 
By Lemma \ref{lem:fingen}
this implies that $R(L)$ is Khovanskii-finite with 
respect to~$\nu_Q$. 
\end{proof}

\begin{remark}\label{rem:curve}
Viewing $\PP^1$ as $\PP(\KK[s,t]_1)$ and 
$\PP^{kd}$ as $\PP(\KK[s,t]_{kd})$, 
we consider the curve $C_{kd}$ as the image of
$$\kappa \colon \PP^1 \rightarrow \PP^{kd}, \quad (\beta s-\alpha t) \mapsto
(\beta s-\alpha t)^{kd}.$$

We may rephrase the above theorem in the following way:
the valuation $\nu_Q$
 is Khovanskii-finite with respect to
$R(L)$  if and only if there exists $k>0$ such that 
$L^k\subseteq \Sym^{kd}(\KK^2)$ contains the image of $Q$ in $C_{kd}$.
We conclude that $R(L)$ admits a Khovanskii-finite homogeneous valuation if and only if $L^k\subseteq \Sym^{kd}(\KK^2)$ intersects $C_{kd}$ non-trivially for some $k$.

If $\Char(\KK)=0$, then $C_{kd}$ is projectively equivalent to the rational normal curve of degree $kd$; in particular, it is not contained in any proper linear subspace. 
\end{remark}

\begin{example}\label{ex:finite}
Assume $\Char(\KK)=0$. Consider the $3$-dimensional linear subspace $L$ of $\Sym^3(\KK^2)$ generated by 
\[
s^3,s^2t,t^3.
\]
For each $k> 0$, $L^k=R(L)_k$ has the basis
\[
	s^{3k},s^{3k-1}t,\ldots,s^2t^{3k-2},t^{3k}.
\]
Then by looking at the coefficient of $st^{3k-1}$, we observe that $(\beta s -\alpha t)^{3k}\in L^k$ if and only if $\beta\alpha^{3k-1}=0$, that is, $\alpha=0$ or $\beta=0$. From Theorem \ref{thm:curve}
we infer that $\nu_Q$ is Khovanskii-finite with respect to $R=R(L)$ if and only if $Q=(1:0)$ or $Q=(0:1)$.

Observe that $R$ is actually a \emph{toric} algebra: it is isomorphic to the semigroup algebra $\KK[S]$, where $S$ is generated by $(0,1),(1,1),(3,1)$. So despite being homogeneously Khovanskii-finite with respect to this $\ZZ^2$-grading, it is not homogeneously Khovanskii-finite with respect to the $\ZZ$-grading from above (where $\deg s=\deg t=1$).
\end{example}

\begin{example}\label{ex:notkhofin}
Assume again that $\Char(\KK)=0$.
	For $d\geq 4$, consider the $(d-1)$-dimensional linear subspace $L$
of $\Sym^d(\KK^2)$ generated by 
\[s^{d-1}t, \ldots, s^{2}t^{d-2}, s^d+t^d\]
For $k>1$, $R(L)_k$ may be described as follows:

\noindent
{\bf{Case $d=4$}} (see also Example \ref{ex:quartic}): $R(L)_k$ has a basis 
$$
s^{kd-2}t^2, \ldots, s^4t^{kd-4}, s^3t^{kd-3} + s^{kd-1}t, s^2t^{kd-2}, s^{kd}+t^{kd}$$

\noindent
{\bf{Case $d \geq 5$}}: $R(L)_k$ has a basis 
$$s^{kd-1}t, \ldots, s^2t^{kd-2}, s^{kd} +t^{kd}.$$
We immediately see that in all cases, $L$ has degree $d$.

Applying Theorem \ref{thm:curve} we obtain that there 
exists no homogeneous valuation that is Khovanskii-finite
with respect to $R(L)$.
Indeed 
$(\beta s -\alpha t)^{dk}$ has either a non-zero $st^{kd-1}$ component if $\alpha \neq 0$  or equals $\beta^{dk}t^{dk}$.
In particular,
for every $k >0$ we obtain
$L^k \cap C_{dk} = \emptyset$,
where $C_{dk}$ denotes the curve from Remark \ref{rem:curve}.

Consider the linear subspace
$L' := \bangle{s^{d-1}t, s^{d-2}t^2, s^d+t^d} \subseteq L.$
Then
$$R(L') = \KK[s^{d-1}t, s^{d-2}t^2, s^d+t^d]  \cong \KK[x_1, x_2, x_3]/ \bangle{x_1^d+x_2^d-x_1^{d-2}x_2x_3}$$
holds and thus $R(L')$ is of degree $d$. 
Note that
$L^k \cap C_{dk} = \emptyset$ implies
$(L')^k \cap C_{dk} = \emptyset$ and thus 
there exists no valuation which is Khovanskii-finite
with respect to $R(L')$.
In particular for every integer $d \geq 4$ 
there exists a rational plane curve $C$
 of degree $d$ admitting no Khovanskii-finite homogeneous valuation.
\end{example}

\begin{theorem}\label{thm:genuskhofincurves}
Let $L \subseteq \Sym^d(\KK^2)$ be of degree $d$ and arithmetic genus $g$. 
If $g \leq 1$ then 
there exists a homogeneous valuation that is 
Khovanskii-finite with respect to $R(L)$.
Furthermore, $R(L)$ is homogeneously Khovanskii-finite if $g=0$, and the converse is true as long as $\KK$ is uncountable and $\Char(\KK)=0$.
\begin{proof}
If the genus of $L$ equals one then for $k \gg 0$ we have
$\dim(L^k) = kd = \dim(\mathrm{Sym}^{kd}(\KK^2)) -1$.
In particular $L^k$ has codimension one in 
$\mathrm{Sym}^{kd}(\KK^2)$ and thus intersects the 
curve $C_{kd}$ of Remark \ref{rem:curve} non-trivially. 
Hence Theorem \ref{thm:curve} implies that $R(L)$ has a Khovanskii-finite valuation.
If the genus of $L$ is equal to zero
there exists a $k \gg 0$ such that
the vector space $L^k$  equals 
$\Sym^d(\KK^2)$. In particular 
$L^k$ includes $C_{kd}$ and thus
$R$ is homogeneously Khovanskii-finite.

Suppose instead that $g\geq 1$ and $\Char(\KK)=0$. Then for each $k$, $C_{kd}$ intersects $L^k$ in at most finitely many points, since $C_{kd}$ is non-degenerate. This implies that there are at most countably many $Q$ for which $\nu_Q$ is Khovanskii-finite with respect to $R(L)$. Assuming $\KK$ is uncountable, this shows that $R(L)$ cannot be homogeneously Khovanskii-finite. 
\end{proof}
\end{theorem}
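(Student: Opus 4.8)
The plan is to translate the statement entirely into the projective-geometric language of Remark \ref{rem:curve}, where $\nu_Q$ is Khovanskii-finite with respect to $R(L)$ if and only if for some $k>0$ the image of $Q$ under the degree-$kd$ map lies in $\PP(L^k)\subseteq\PP^{kd}$, and $R(L)$ admits some Khovanskii-finite homogeneous valuation if and only if $C_{kd}\cap\PP(L^k)\neq\emptyset$ for some $k$. All the work then reduces to comparing $\dim L^k$ with $\dim\Sym^{kd}(\KK^2)=kd+1$, together with the fact (in characteristic zero) that $C_{kd}$ is projectively equivalent to the rational normal curve and hence non-degenerate.

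For the existence statements I would use that, since $\deg L=d$, the Hilbert polynomial of $R(L)$ is $k\mapsto dk+1-g$, so $\dim L^k=dk+1-g$ for all $k\gg 0$. When $g=0$ this gives $\dim L^k=kd+1=\dim\Sym^{kd}(\KK^2)$, hence $L^k=\Sym^{kd}(\KK^2)$ and $\PP(L^k)\supseteq C_{kd}$; the image of every $Q$ then lies in $\PP(L^k)$, so Theorem \ref{thm:curve} shows every $\nu_Q$ is Khovanskii-finite and $R(L)$ is homogeneously Khovanskii-finite. When $g=1$ we get $\dim L^k=kd$, so $\PP(L^k)$ is a hyperplane in $\PP^{kd}$; as a hyperplane must meet the curve $C_{kd}$, we have $C_{kd}\cap\PP(L^k)\neq\emptyset$ and Theorem \ref{thm:curve} yields a Khovanskii-finite valuation. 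This settles the first claim and the forward direction of the second, and I note the $g=1$ argument needs no hypothesis on $\KK$.

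For the converse I would assume $g\geq 1$, $\Char(\KK)=0$, and $\KK$ uncountable, and show that only countably many $Q$ give a Khovanskii-finite $\nu_Q$. The crucial point is that $L^k$ is a \emph{proper} subspace of $\Sym^{kd}(\KK^2)$ for \emph{every} $k$: if $L^k=\Sym^{kd}(\KK^2)$ held for some $k$, then surjectivity of multiplication $\Sym^{kd}(\KK^2)\otimes\Sym^{kd}(\KK^2)\to\Sym^{2kd}(\KK^2)$ would force $L^{2^jk}=\Sym^{2^jkd}(\KK^2)$ for all $j$, contradicting $\dim L^{k'}=dk'+1-g<dk'+1$ for $k'\gg 0$. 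Since $C_{kd}$ is non-degenerate, a proper linear subspace cannot contain it, so $C_{kd}\cap\PP(L^k)$ is a proper closed subset of the irreducible curve $C_{kd}$ and hence finite. Taking the union over all $k$, the set of $Q$ with $\nu_Q$ Khovanskii-finite is countable; as $\PP^1(\KK)$ is uncountable, some $\nu_Q$ fails to be Khovanskii-finite, so $R(L)$ is not homogeneously Khovanskii-finite.

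The main obstacle I anticipate is the converse, specifically the justification that $L^k$ remains proper for \emph{all} $k$ rather than only for $k\gg 0$, and that $C_{kd}\cap\PP(L^k)$ is genuinely finite; both rest on the non-degeneracy of $C_{kd}$, which is precisely where the characteristic-zero hypothesis is used. The uncountability of $\KK$ then enters only to upgrade ``countably many good $Q$'' to ``not every $Q$ is good.''
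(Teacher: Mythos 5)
Your proposal is correct and follows essentially the same route as the paper's proof: Hilbert polynomial computations for the $g\leq 1$ existence statements, and non-degeneracy of $C_{kd}$ plus a counting argument for the converse. The one place you go beyond the paper is your explicit verification that $L^k$ is a proper subspace for \emph{every} $k$ (via surjectivity of multiplication on symmetric powers), a step the paper leaves implicit but which is indeed needed for the finiteness of $C_{kd}\cap \PP(L^k)$ at each $k$.
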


\begin{corollary}\label{cor:thm3}
Let $X$ be the affine cone over a projective integral rational curve $Y$ and assume that $\KK$ is uncountable and $\Char(\KK)=0$. Then $X$ is homogeneously Khovanskii-finite if and only if $Y$ is smooth.
\end{corollary}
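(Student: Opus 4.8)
The plan is to reduce the statement about the affine cone $X$ over an arbitrary projective integral rational curve $Y$ to the concrete setting of Theorem \ref{thm:genuskhofincurves}, which characterizes homogeneous Khovanskii-finiteness of $R(L)$ in terms of arithmetic genus. The affine cone $X = \Spec R$ is determined by a choice of polarization on $Y$, giving $R$ a natural $\ZZ_{\geq 0}$-grading with $R_0 = \KK$ and $\Proj(R) = Y$. Since Khovanskii-finiteness is insensitive to passing to a Veronese subalgebra (we may replace $R$ by $R(\lambda\ZZ)$ without changing the value semigroups up to finite index by Lemma \ref{lem:fingen} and the surrounding discussion), I would first arrange that $R$ is generated in degree one. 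At this point $R$ is the homogeneous coordinate ring of $Y$ under a projectively normal-enough embedding, and the key observation is that because $Y$ is rational, its field of homogeneous fractions is a rational function field in one variable; hence $R$ (after the Veronese reduction) is exactly of the form $R(L)$ for some linear subspace $L \subseteq \Sym^a(\KK^2)$, as spelled out in the Construction preceding Lemma \ref{lem:vg}.

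The central step is then to invoke Theorem \ref{thm:genuskhofincurves}. That result states that $R(L)$ is homogeneously Khovanskii-finite if and only if the arithmetic genus $g$ of $L$ is zero (using that $\KK$ is uncountable and $\Char(\KK)=0$ for the ``only if'' direction). Since the arithmetic genus of $L$ equals the arithmetic genus of $Y = \Proj R(L)$, and since $Y$ is an integral rational curve, the condition $g = 0$ is precisely the condition that $Y$ is smooth: a rational curve has arithmetic genus zero exactly when its geometric genus (which is zero, as $Y$ is rational) agrees with its arithmetic genus, i.e.\ when there are no singularities contributing $\delta$-invariants. This equivalence between $g=0$ and smoothness for integral rational curves is the geometric content I would cite explicitly.

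The main subtlety — and the step I would treat most carefully — is verifying that homogeneous Khovanskii-finiteness of $X$ is genuinely unchanged under the Veronese reduction used to bring $R$ into the standard form $R(L)$. One must check that every full rank homogeneous valuation on $R$ restricts to one on the Veronese subalgebra with finitely generated value semigroup if and only if the original does, which follows from Lemma \ref{lem:fingen} together with the fact that $C(R,\nu)$ and $C(R(\lambda\ZZ),\nu)$ generate the same rational cone. A second point worth a sentence is that, by \cite[\S5.2]{poised}, every full rank homogeneous valuation on $R(L)$ has value semigroup isomorphic to that of some $\nu_Q$, so the ``for all valuations'' quantifier in the definition of homogeneous Khovanskii-finiteness matches the quantifier over points $Q \in \PP^1$ appearing in Theorem \ref{thm:genuskhofincurves}. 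Once these compatibilities are in place, the corollary is immediate from Theorem \ref{thm:genuskhofincurves} and the genus-versus-smoothness dictionary for rational curves.
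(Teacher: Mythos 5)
Your proposal follows essentially the same route as the paper: reduce via a Veronese subalgebra to the case where $R$ is generated in degree one, identify $R$ with $R(L)$ for $L=R_1$ inside the normalization $\bar R\cong\bigoplus_k \Sym^{ka}(\KK^2)$, invoke Theorem \ref{thm:genuskhofincurves}, and use that an integral rational curve is smooth exactly when its arithmetic genus vanishes (which the paper records just before Lemma \ref{lem:vg}).

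One hypothesis you pass over: Theorem \ref{thm:genuskhofincurves} requires $L\subseteq\Sym^{d}(\KK^2)$ to have degree equal to $d$, i.e.\ the symmetric-power index $a$ coming from the normalization must agree with $\deg Y$. This is not automatic from the Construction alone and is genuinely used in the proof of that theorem (the dimension counts $\dim L^k=kd+1-g$ versus $\dim\Sym^{ka}(\KK^2)=ka+1$ only give the codimension-zero or codimension-one conclusions when $a=d$). The paper supplies this via Proposition \ref{prop:norm} (equivalently Lemma \ref{lem:degequ}): normalization preserves the valuation cone, hence the degree, so $a=d$. Adding that one sentence closes the gap; everything else in your argument matches the paper's proof.
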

\begin{proof}

If $X$ is $\Spec R$, then $R$ is generated in degree one and has degree say $d$. Embedding $R$ into its normalization $\bar R\cong \bigoplus_{k\geq 0}\Sym^{ka}(\KK^2)$ we set $L=R_1$. Then $a=d$ by Proposition \ref{prop:norm} and $R\cong R(L)$. We now apply the above theorem.
\end{proof}
\begin{remark}\label{rem:cor}
	Corollary \ref{cor:thm3} applies to any $X=\Spec R$ for which $\Proj R=Y$, not just those with $R$ generated in degree one. Indeed, since  passing to a standard Veronese subalgebra changes neither the value semigroups nor the curve $Y = \Proj R$, we may assume that $R$ is generated in degree one. 
\end{remark}

\begin{example}\label{ex:countable}
Even if $R$ is not Khovanskii-finite, there may be an infinite number of valuations of the form $\nu_Q$ which are Khovanskii finite. For example, with $\KK=\CC$ consider the domain
\[
	R(L),\qquad L=\bangle{s^2t,st^2,s^3+t^3}.
\]
It is straightforward to show that the degree $k$ piece $L^k$ of $R(L)$ has basis 
\[\{	s^{3k-1}t,s^{3k-2}t^2,\ldots,st^{3k-1},s^{3k}+t^{3k}\}.\]
Using this basis of $L^k$, it follows that the condition $(\beta s-\alpha t)^{3k}\in L^k$ is equivalent to
\[
	\beta^{3k}=(-\alpha)^{3k}.
\]
By Theorem \ref{thm:curve}, $\nu_Q$ is thus Khovanskii-finite with respect to $R$ if and only if $Q=(1:\eta)$ for some root of unity $\eta$. In particular, $R$ is not Khovanskii-finite, but there are infinitely many Khovanskii-finite valuations.
\end{example}

\begin{theorem}\label{thm:kf}
	Assume $\Char(\KK)=0$.
Let $L \subseteq \Sym^d(\KK^2)$ be a very general 
$3$-dimensional linear subspace
for $d\geq 4$.
Then there exists no homogeneous valuation which is
Khovanskii-finite with respect to $R(L)$.
\begin{proof}
Setting 
\[g:=\frac{(d-1)(d-2)}{2},\]
we consider the incidence variety
$$W_k = \left\{(V,P) \in \Gr(kd+1-g, kd+1) \times C_{kd}\ |\  \ P \in V\right\}$$
inside of 
$$\Gr(kd+1-g, kd+1) \times C_{kd}.$$
This incidence variety is closed 
as it is locally given by the vanishing of minors,
and hence projective. So its image $Y_k$ in 
$\Gr(kd+1-g, kd+1)$ is closed.

For $k\geq d$, consider the rational map 
\[
\phi_k: \Gr(3, \Sym^d(\KK^2)) \to \Gr(kd+1-g, kd+1) 
\]
sending $L$ to $L^k$. The indeterminacy locus of this map is contained in a fixed proper subvariety $Z_{d,dk-g+1}$, see Remark \ref{rem:genus}.

Let $Y_k'$ be the preimage in 
$\Gr(3, \Sym^d(\KK^2))$ of $Y_k$ under the rational map $\phi_k$.
By Example \ref{ex:notkhofin} above the inclusion
$\overline{Y_k'}\subset \Gr(3, \Sym^d(\KK^2))$
is proper.
Thus $\Gr(3, \Sym^d(\KK^2))$ properly contains 
$\Delta = \cup_{k \geq 0} \overline{Y_k'}$. 
Choosing $L$
in the complement of $\Delta$ and $Z_{d,dk-g+1}$, it follows that there exists no homogeneous valuation which is
Khovanskii-finite with respect to $R(L)$.
\end{proof}
\end{theorem}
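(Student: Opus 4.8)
The plan is to convert the nonexistence of a Khovanskii-finite valuation into a geometric incidence condition on the Grassmannian $\Gr(3,\Sym^d(\KK^2))$ and then show that this condition holds only on a countable union of proper closed subvarieties. By Theorem \ref{thm:curve} together with Remark \ref{rem:curve}, $R(L)$ admits a Khovanskii-finite homogeneous valuation if and only if $L^k\subseteq \Sym^{kd}(\KK^2)$ meets the curve $C_{kd}$ for some $k>0$; and since $\Char(\KK)=0$, $C_{kd}$ is a rational normal curve of degree $kd$, hence nondegenerate. For a very general $L$ the degree equals $d$ by Lemma \ref{lem:vg}, the genus is $g=(d-1)(d-2)/2$ by Remark \ref{rem:genus}, and so $\dim L^k = kd+1-g$ for $k\geq d$. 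The goal thus reduces to the following: for very general $L$ and every $k$, one has $L^k\cap C_{kd}=\emptyset$.

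First I would, for each fixed $k\geq d$, parametrize those subspaces of the appropriate dimension that meet $C_{kd}$. Concretely, I consider the incidence variety $W_k=\{(V,P)\ |\ P\in V\}\subseteq \Gr(kd+1-g,kd+1)\times C_{kd}$, which is cut out by vanishing of minors and hence closed, so its image $Y_k\subseteq \Gr(kd+1-g,kd+1)$ under the first projection is closed. I then pull this back along the rational map $\phi_k\colon \Gr(3,\Sym^d(\KK^2))\dashrightarrow \Gr(kd+1-g,kd+1)$ sending $L\mapsto L^k$, whose indeterminacy locus is contained in the proper subvariety where $\dim L^k$ drops below $kd+1-g$ (cf.\ Remark \ref{rem:genus}). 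The resulting preimage $Y_k'$ is precisely the locus of $L$ with $L^k\cap C_{kd}\neq\emptyset$.

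The crux is to verify that each $\overline{Y_k'}$ is a \emph{proper} closed subvariety, i.e.\ that the incidence condition fails for at least one $L$. This is exactly what Example \ref{ex:notkhofin} supplies: the explicit subspace $L=\langle s^{d-1}t,\ldots,s^2t^{d-2},s^d+t^d\rangle$ satisfies $L^k\cap C_{kd}=\emptyset$ for all $k>0$. Hence $\overline{Y_k'}\subsetneq \Gr(3,\Sym^d(\KK^2))$ for every $k$, where I also note that this single witness works uniformly across all $k$ simultaneously.

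Finally I would set $\Delta=\bigcup_{k\geq 0}\overline{Y_k'}$, a countable union of proper closed subvarieties, and choose $L$ very general, namely in the complement of $\Delta$ and of every indeterminacy locus appearing above. For such an $L$ we have $L^k\cap C_{kd}=\emptyset$ for all $k$, so by Theorem \ref{thm:curve} no valuation $\nu_Q$ is Khovanskii-finite with respect to $R(L)$; since every full rank homogeneous valuation on $R(L)$ has value semigroup isomorphic to some $S(R(L),\nu_Q)$, the conclusion follows. The main obstacle is the bookkeeping ensuring that $\phi_k$ is genuinely defined away from a fixed proper locus (the degree/genus count from Remark \ref{rem:genus}) and that the witness of Example \ref{ex:notkhofin} defeats $C_{kd}$ for all $k$ at once; once these are in place, the rest is the standard countable-union-of-proper-subvarieties argument underlying ``very general.''
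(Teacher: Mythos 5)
Your proposal is correct and follows essentially the same route as the paper's own proof: the same incidence variety $W_k$, the same rational map $\phi_k\colon L\mapsto L^k$ with indeterminacy controlled by the degree/genus count of Remark \ref{rem:genus}, the same witness from Example \ref{ex:notkhofin} showing each $\overline{Y_k'}$ is proper, and the same countable-union argument. The only difference is that you make explicit the reduction via Theorem \ref{thm:curve} and Remark \ref{rem:curve} at the outset, which the paper leaves implicit.
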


\begin{corollary}\label{cor:thm4}
	Assume $\Char(\KK)=0$.
	Let $X=\Spec R$ be the affine cone over a very general integral rational plane curve of degree $d>3$. Then there is no homogeneous valuation $\nu$ which is Khovanskii-finite with respect to $R$.
\end{corollary}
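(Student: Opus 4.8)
The plan is to deduce Corollary \ref{cor:thm4} almost immediately from Theorem \ref{thm:kf} by translating between the geometric language of the corollary (affine cones over plane curves) and the algebraic language of the theorem (Veronese-type rings $R(L)$ for $L\subseteq\Sym^d(\KK^2)$). First I would set up the dictionary. Let $X=\Spec R$ be the affine cone over a very general integral rational plane curve $Y\subseteq\PP^2$ of degree $d>3$, so that $R=\KK[x,y,z]/\langle f\rangle$ is generated in degree one with $\Proj R=Y$. Since $Y$ is rational and integral, its normalization is $\PP^1$, and passing to $\bar R$ together with Proposition \ref{prop:norm} tells me that $\bar R\cong\bigoplus_{k\ge 0}\Sym^{ka}(\KK^2)$ for the appropriate value $a$; setting $L:=R_1\subseteq\Sym^a(\KK^2)$ I get $R\cong R(L)$.

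The key step is to verify that the parameters match up so that Theorem \ref{thm:kf} applies, namely that $L$ is a \emph{$3$-dimensional} subspace of $\Sym^{d}(\KK^2)$ of degree exactly $d$, and that ``$Y$ very general'' forces ``$L$ very general'' in $\Gr(3,\Sym^d(\KK^2))$. The dimension is clear: $R$ is the coordinate ring of a plane curve, so $R_1$ is three-dimensional, i.e.~$\dim L=3$. For the identification $a=d$, I invoke Proposition \ref{prop:norm}, which gives that the degree of $Y$ (equivalently the degree of $L$) equals $a$; combined with $\deg Y=d$ this yields $a=d$, exactly as in the proof of Corollary \ref{cor:thm3}. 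This is the point where Remark \ref{rem:genus} is relevant: a general $L$ in $\Gr(3,\Sym^d(\KK^2))$ has degree $d$ and genus $g=(d-1)(d-2)/2$, so ``degree-$d$ and three-dimensional'' is the generic stratum, and the very general curves of degree $d$ correspond to very general such $L$.

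Finally I would conclude by applying Theorem \ref{thm:kf} directly: for $L\subseteq\Sym^d(\KK^2)$ a very general $3$-dimensional subspace with $d\ge 4$, there is no homogeneous valuation that is Khovanskii-finite with respect to $R(L)\cong R$, which is precisely the assertion of the corollary. The main subtlety to handle carefully is the translation of ``very general'' between the moduli of plane curves and the Grassmannian $\Gr(3,\Sym^d(\KK^2))$: I must ensure that the countable union of closed conditions excluded in Theorem \ref{thm:kf} pulls back to a countable union of closed conditions on the space of degree-$d$ rational plane curves, so that a very general curve $Y$ indeed produces an $L$ avoiding the bad locus $\Delta\cup Z_{d,dk-g+1}$. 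Since the map sending a curve to its $R_1$ is algebraic and the very-general locus is preserved under such correspondences, this causes no real difficulty, and the proof is short.
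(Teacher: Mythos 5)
Your proposal is correct and follows exactly the paper's route: the paper's entire proof of Corollary \ref{cor:thm4} is the one-line statement that it is ``just a geometric reformulation of Theorem \ref{thm:kf}'', and your write-up simply fills in that reformulation (the identification $R\cong R(L)$ with $L=R_1$ three-dimensional, the equality $a=d$ via Proposition \ref{prop:norm} as in Corollary \ref{cor:thm3}, and the matching of ``very general'' loci). The extra care you take with transporting the very-general condition between the Grassmannian and the space of plane curves is a reasonable elaboration of a point the paper leaves implicit.
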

\begin{proof}
	This is just a geometric reformulation of Theorem \ref{thm:kf}.
\end{proof}

\begin{example}
	While Corollary \ref{cor:thm4} guarantees that the affine cone over a very general rational plane curve of degree at least four has no homogeneous Khovanskii-finite valuations, there do exist \emph{special} rational plane curves of arbitrary degree with homogeneous Khovanskii-finite valuations.
Consider for example the toric variety
$\Proj \KK[s^4,s^3t, t^4]$, which is a plane projective curve of 
degree $4$ and genus $3$.
We consider the valuation $\nu_{Q}$ on  $R:=\KK[s^4,s^3t, t^4]$ for $Q=(0,1)$.
The value semigroup is generated by $(4,1)$, $(3,1)$, and $(0,1)$, in particular it is finitely generated.
\end{example}

\begin{remark}\label{rem:procedure}
	In \cite[Procedure 1]{boss-et-al}, an algorithm is proposed which, given a presentation $\KK[x_0,\ldots,x_n]/I$ of a finitely generated $\KK$-algebra $A$, attempts to compute a new presentation such that the corresponding tropicalization has more so-called prime cones. By \cite{kaveh-manon}, this will lead to a Khovanskii-finite full rank valuation $\nu$ on~$A$.

	The algorithm proceeds by adding certain new generators $y_1,\ldots,y_s$ to the already existing generators $x_0,\ldots,x_n$. In the case that $A$ is $M$-graded and the generators $x_i$ are homogeneous with respect to $M$, then the new generators $y_i$ the algorithm produces will also be homogeneous. This implies that, should the algorithm produce an embedding with a new prime cone, the corresponding valuation will be $M$-homogeneous.

	From this we conclude: if $A$ is $M$-homogeneous, and has no full rank homogeneous valuations which are Khovanskii-finite, then the algorithm \cite[Procedure 1]{boss-et-al} will not terminate if its input is any homogeneous presentation of $A$. In particular, Corollary \ref{cor:thm4} shows that for any very general plane curve of degree at least four in characteristic zero, this algorithm will not terminate.
\end{remark}

\begin{remark}
	One might instead ask about Khovanskii-finiteness properties for affine cones over \emph{smooth, non-rational} curves $Y$. Here, the homogeneous valuations one must consider are not as straightforward as in the rational case.
However, we do remark that every smooth projective curve $Y$ admits an 
embedding such that there exists a homogeneous valuation $\nu$
which is Khovanskii-finite with respect to 
the corresponding homogeneous coordinate ring.
Indeed, 
let $g$ be the genus of $C$, let $P\in C$ be any point
and consider the divisor
$D := (2g+1)P$.
Set $R=\bigoplus_{k\in \ZZ_{\geq 0}} H^0(Y,\CO(kD))$.
Since $\deg D=2g+1$, $D$ is very ample, $R$ is generated in degree one, and $Y=\Proj R$.
We consider the homogeneous valuation $\nu$ sending a section $f$ of $H^0(Y,\CO(kD))$ to $(k,\nu_P(f))\in\ZZ^2$, where $\nu_P$ denotes the order of vanishing at the point $P$.

We consider the valuation $\nu$ sending a section $f$ of $H^0(Y,\CO(kD))$ to $(k,\nu_P(f)$.
As $D$ is basepoint free there exists an $f \in H^0(Y,\mathcal{O}(D))$ with
$\ord_P(f) = -d$,
and as $D-P$ is base point free 
there exists an $f' \in H^0(Y,\mathcal{O}(D))$ 
with $\ord_P(f') = -d+1$.
Note that $1 \in H^0(Y,\mathcal{O}(D))$ holds.
So the value semigroup 
$S= S(R(D), \nu)$ contains
$(1,0), (1, -d)$ and $(1,1-d)$.
Since $\deg(R(D)) = d$ holds
these are ray generators for the valuation cone and thus by Lemma \ref{lem:fingen}, $S$ is finitely generated. 
\end{remark}

	While Theorem \ref{thm:curve} gives a criterion for checking whether the affine cone over a rational curve admits a homogeneous Khovanskii-finite valuation, it is not  effective: one must check countably many conditions to show that the ring $R$ admits no such valuation. It would be interesting to solve the following problem:
	\begin{problem}\label{prob:effective}
	Find an effective algorithm to determine whether an affine cone over a rational curve admits a homogeneous Khovanskii-finite valuation.
\end{problem}
\noindent One might naively hope to show that the degree and genus of a curve give an upper bound on the values of $k$ for which one must check the criterion of Theorem~\ref{thm:curve} for any fixed valuation $\nu_Q$. However, Example \ref{ex:countable} shows that this is impossible. Nonetheless, one could hope for a bound of $k$ in terms of degree and genus such that if $L^k\cap C_{dk}=\emptyset$, then 
$L^{k'}\cap C_{dk'}=\emptyset$ for all $k'>k$.

\section{Almost Toric Varieties}\label{sec:c1}
Recall that an \emph{almost toric variety} is a rational not-necessarily normal variety $X$ equipped with a faithful action by a codimension-one torus. Any affine almost toric variety is parametrized by functions which are products of monomials with univariate rational functions. As such they are a natural generalization of toric varieties, and appear naturally in more applied settings such as algebraic statistics, see e.g. \cite[Equation (18)]{cumulant}.

In this section, we will study Khovanskii-finiteness for almost toric varieties. As a first step, we improve on Theorem \ref{thm:khofin} in this special case. See \S\ref{sec:veronese} for notation on Veronese subalgebras.

\begin{theorem}\label{thm:almtorkhofin}
Let $R$ be a rational finitely generated $M$-graded $\KK$-domain
of complexity one
with $R_0 = \KK$
and $\nu \colon R \rightarrow \Gamma$ a fully homogeneous valuation. 
Let $\pi$ denote the projection $C(R,\nu)\to M_\RR$.
Then $C(R, \nu)$ is rational polyhedral and 
the following statements are equivalent:
\begin{enumerate}
\item
$\nu$ is Khovanskii-finite with respect to $R$.
\item
$\nu$ is Khovanskii-finite with respect to
the Veronese subalgebras $R_{\pi(\rho)}$,
where $\rho$ runs over all rays of ${C(R, \nu)}$.
\end{enumerate}
\end{theorem}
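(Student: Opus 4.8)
The plan is to deduce Theorem \ref{thm:almtorkhofin} from the general reduction in Theorem \ref{thm:khofin} by verifying its hypotheses and then absorbing the auxiliary valuation-cone conditions that appear in conditions (2) and (3) of that theorem. There are two things to establish beyond what Theorem \ref{thm:khofin} gives directly: first, that $C(R,\nu)$ is rational polyhedral (which is a standing hypothesis of Theorem \ref{thm:khofin} but is asserted outright here), and second, that the requirement ``the valuation cone of $R_{\pi(\rho)}$ equals $\pi^{-1}(\pi(\rho))$'' can be dropped in the complexity-one setting, so that the clean equivalence between Khovanskii-finiteness of $\nu$ on $R$ and on the ray Veronese subalgebras $R_{\pi(\rho)}$ holds.

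For the rational polyhedrality of $C(R,\nu)$, I would invoke the machinery of \S\ref{sec:global}. By Proposition \ref{prop:norm} we may replace $R$ by its normalization $\bar R$ without changing $C(R,\nu)$, so assume $R$ is normal. Since $X$ has complexity one, the p-divisor presentation has base $Y$ a curve, and every full rank homogeneous valuation arises (up to semigroup isomorphism) from a flag valuation $\nu_{\mathcal F}$ on this curve. The key input is that for a \emph{curve} $Y$, the global Newton-Okounkov body $\Delta(Y,\nu_{\mathcal F})$ is rational polyhedral: on a curve the relevant section rings are governed by degrees of divisors, and the Néron-Severi space together with the single-step flag makes $\Delta(Y,\nu_{\mathcal F})$ a finite-sided cone. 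Then Theorem \ref{thm:global} yields that $C(R,\nu)=C(R,\nu_{\mathcal F})$ is rational polyhedral.

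Once polyhedrality is in hand, Theorem \ref{thm:khofin} applies and gives the equivalence of (1) with condition (3) of that theorem, namely that for each ray $\rho\preceq C(R,\nu)$, $\nu$ is Khovanskii-finite with respect to $R_{\pi(\rho)}$ \emph{and} $C(R_{\pi(\rho)},\nu)=\pi^{-1}(\pi(\rho))$. To match the statement here, I must show that the second clause is automatic in the complexity-one case. The point is that $R_{\pi(\rho)}$ is itself a $\ZZ$-graded complexity-one domain with $\Proj$ a rational curve, so $\pi^{-1}(\pi(\rho))$ is a two-dimensional cone whose extremal rays correspond to the zero and maximal orders of vanishing; by Remark \ref{rem:okbo} applied on the curve, $C(R_{\pi(\rho)},\nu)$ always fills out this full two-dimensional cone $\cone((1,0),(1,d_\rho))=\pi^{-1}(\pi(\rho))$ regardless of Khovanskii-finiteness, because the Newton-Okounkov body of a curve is a full-dimensional interval determined by the degree. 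Hence the valuation-cone condition holds identically and may be omitted.

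The main obstacle I anticipate is the rational polyhedrality of the global Newton-Okounkov body of the base curve, feeding into Theorem \ref{thm:global}: one must make precise that on a curve the global Okounkov cone really is finite-sided, and care is needed because the p-divisor $\D$ may have several regions of linearity, so I would follow the decomposition of $\omega(R)$ into linearity chambers $\omega_i$ as in the proof of Theorem \ref{thm:global}, verify polyhedrality on each chamber via the curve case of \cite[Theorem B]{LM}, and take the (finite) convex hull. The secondary technical point—checking that $C(R_{\pi(\rho)},\nu)$ is always the full cone $\pi^{-1}(\pi(\rho))$—should follow routinely from the degree computation of Remark \ref{rem:okbo}, but it is what lets us present the equivalence in the streamlined two-condition form stated in the theorem.
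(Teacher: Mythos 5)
Your reduction to Theorem \ref{thm:khofin} is the right frame, and your route to rational polyhedrality of $C(R,\nu)$ (normalize via Proposition \ref{prop:norm}, observe that in complexity one every full rank homogeneous valuation restricts to a point valuation on the base $\PP^1$, and feed the polyhedral global Okounkov body of a curve into Theorem \ref{thm:global}) is a workable alternative to the paper's shorter argument, which simply cites \cite[Theorem 1.3]{poised} together with Lemma \ref{lem:fingen}. The problem is the second half: your claim that the condition $C(R_{\pi(\rho)},\nu)=\pi^{-1}(\pi(\rho))$ is ``automatic'' and ``follows routinely from the degree computation of Remark \ref{rem:okbo}'' assumes precisely what has to be proved. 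Remark \ref{rem:okbo} tells you that $C(R_{\pi(\rho)},\nu)$ is a two-dimensional cone whose width over $\pi(\rho)$ is $\deg \Proj R_{\pi(\rho)}$. It says nothing about the width of $\pi^{-1}(\pi(\rho))$, which is the fiber of $C(R,\nu)$ over the ray $\pi(\rho)$ and is governed by limits of values $\nu(f)$ for $f$ whose degrees lie \emph{off} that ray. The inclusion $C(R_{\pi(\rho)},\nu)\subseteq\pi^{-1}(\pi(\rho))$ is clear, but when $\pi(\rho)$ lies in the relative boundary of $\omega(R)$ and $R$ is not normal, the fiber can a priori be strictly wider, because normalization need not commute with the ray Veronese: $\overline{R_{\pi(\rho)}}$ (which controls $\deg\Proj R_{\pi(\rho)}$ via Lemma \ref{lem:degequ}) may be strictly smaller than $\bar{R}_{\pi(\rho)}$ (which controls the fiber of $C(\bar R,\nu)=C(R,\nu)$). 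Identifying these two widths is exactly the content you cannot get from Remark \ref{rem:okbo} alone.

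This is where essentially all of the paper's effort goes. For rays with $\pi(\rho)$ meeting the interior of $\omega(R)$ the equality is supplied by Lemma \ref{lem:VSA} (which you never invoke, though your argument, if correct, would render it and the entire reduction superfluous). For boundary rays the paper performs a chain of reductions: cut down to a two-dimensional weight cone using Lemma \ref{lem:VSA}, replace the generators by elements $\hat z_j$ whose degrees lie on the two rays so that Lemma \ref{lem:raygen}(2) applies and normalization does commute with the ray Veronese, and only then pass to $\bar R$ and invoke \cite[Theorem 1.3]{poised} to conclude $\pi^{-1}(\pi(\rho))=C(R_{\pi(\rho)},\nu)$. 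Your proposal skips all of this, so as written it does not establish the implication $(2)\Rightarrow(1)$.
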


Before proving the theorem, we illustrate with an example:
\begin{example}\label{ex:at}
	Consider the $\ZZ^2$-graded domain
	\[R=K[x,(t-1)x,(t-1)^3x,ty,t^2y,(1+t^3)y]\]
	with $\deg(x) = (1,0)$ and $\deg(y) = (0,1)$.
	This has weight cone $\RR_{\geq 0}^2$.
	Using Theorem \ref{thm:almtorkhofin}, we will see that a full rank homogeneous valuation $\nu$ is Khovanskii-finite with respect to $R$ if and only if the restriction of $\nu$ to $\KK(t)$ is equivalent to the valuation $\nu_1$ with $\nu_1(t-1)=1$.

	In order to apply the theorem, we need to know the rays of the valuation cone.	Following \cite{langlois}, we may describe $\bar R$ as
	\[
		\bigoplus_{u\in \omega\cap M} H^0(\PP^1,\CO(\D(u)))\cdot \chi^u
	\]
	for $\D(u)=3(u_1+u_2)\cdot\{\infty\}\in\WDIV(\PP^1)$,
	where we are viewing $t$ as a local parameter on $\PP^1$ whose principal divisor is $\{0\}-\{\infty\}$, and we are identifying $x$ with $\chi^{(1,0)}$ and $y$ with $\chi^{(0,1)}$. Alternatively, an explicit calculation shows that $\bar R=\KK[x,tx,t^2x,t^3x,y,ty,t^2y,t^3y]$.
	
	However, the previous description in terms of $\D$ is more useful for us. From this description and \cite[Corollary 5.5]{poised}, it follows that for any full rank homogeneous valuation $\nu$, 
	the valuation cone $C(R,\nu)$ is lattice equivalent to 
	\[
		\cone\{(1,0,0),(1,0,3),(0,1,0),(0,1,3)\}\subset M_\RR\times \RR.
	\]
	In particular, any ray of $C(R,\nu)$ projects to the ray $\rho_1$ generated by $(1,0)$ or $\rho_2$ generated by $(0,1)$ in $\omega$.

	Thus, to apply the theorem, we must consider the Veronese subalgebras
	\begin{align*}
		R_{\rho_1}=&\KK[x,(t-1)x,(t-1)^3x]\\
		R_{\rho_2}=&\KK[ty,t^2y,(1+t^3)y]
	\end{align*}
	For $R_{\rho_1}$, we obtain Example \ref{ex:finite} after homogenization and change of coordinates, so $\nu$ is Khovanskii-finite with respect to $R_{\rho_1}$ if and only if the restriction of $\nu$ to $\KK(t)$ is equivalent to $\nu_1$ above, or $\nu_\infty$, the valuation with $\nu_\infty(t)=-1$.

	For $R_{\rho_2}$, we obtain Example \ref{ex:countable} after homogenization and change of coordinates, so $\nu$ is Khovanskii-finite with respect to $R_{\rho_2}$ if and only if the restriction of $\nu$ to $\KK(t)$ is equivalent to $\nu_{\eta}$ for some root of unity $\eta$, where $\nu_{\eta}(t-\eta)=1$. Combining these two criteria, it follows from Theorem \ref{thm:almtorkhofin} that $\nu$ is Khovanskii-finite with respect to $R$ if and only if its restriction to $\KK(t)$ is equivalent to $\nu_1$.
\end{example}

\begin{proof}[Proof of Theorem \ref{thm:almtorkhofin}]
Due to Proposition \ref{prop:norm} we have
${C(R, \nu)} = C(\bar R , \nu)$,
where $\bar R$ denotes the normalization of $R$.
Thus $C(R, \nu)$ is rational polyhedral 
due to \cite[Theorem 1.3]{poised} and Lemma \ref{lem:fingen}.

The second condition follows from the first by Theorem \ref{thm:khofin}. Likewise, to show that the second condition implies the first, we may use Theorem \ref{thm:khofin} and Lemma \ref{lem:VSA}
to reduce to showing that for any ray $\rho$ of $C(R,\nu)$ with $\pi(\rho)$ in the relative boundary of $\omega=\omega(R)$, 
$\pi^{-1}(\pi(\rho)) = C(R_{\pi(\rho)}, \nu)$ holds.
If $\omega$ is one-dimensional, there is nothing to show, so we assume that $\dim(\omega)\geq 2$.
We will prove the claim by a series of reductions. First, we reduce to the case that $\dim (\omega)=2$, then we reduce to the case that the degrees of the generators of $R$ lie on the rays of $\omega$, from which we then reduce to the case that $R$ is normal.

Consider a ray $\rho$ with $\pi(\rho)$ in the relative boundary of $\omega$.
Then there exists
an intersection of hyperplanes $H_1, \ldots, H_r$ 
with $H_1\cap \ldots \cap H_r \cap \omega(R)^\circ \neq \emptyset$ 
such that $H_1 \cap \ldots \cap H_r$ is two-dimensional and 
contains $\pi(\rho)$. In particular $\pi(\rho) \preceq H_1 \cap \ldots \cap H_r \cap \omega(R)$. Using Lemma \ref{lem:VSA}
we thus may reduce to the case that
$\omega$ is two-dimensional.

\begin{center}
\begin{tikzpicture}[scale=0.6]
    \path[fill=gray!60!] (0,0)--(-1.6,4)--(-0.3,4.5)--(0,0);
    \draw (0,0)--(-1.6,4) node[above] {$\rho_1$};
    \draw (0,0)  -- (-0.3,4.5) node[above] {$\rho_2$};
    \draw (0,0)  -- (4.4,2);
    \filldraw[black] (-0.1,1.5) circle (2pt) node[right] {$y_1$};
    \filldraw[black] (-0.4,1) circle (2pt);
    \filldraw[black] (-0.6, 1.5) circle (2pt) node[left]{$x_i$};
    \filldraw[black] (-1.2,3) circle (2pt);
    \path[fill, color=black] (2,2.5) circle (0.0ex)  node[]{$y_2, \ldots, y_r$};
    \filldraw[black] (1.1,0.5) circle (2pt);
  \end{tikzpicture}
\end{center}

Set $\rho_1=\pi(\rho)$. Let $x_1,\ldots,x_t,y_1,\ldots,y_r$ be 
be homogeneous 
generators of $R$ 
such that $\deg(x_{i}) \in \rho_1$ holds and $\deg(y_1)$ generates the ray 
$\rho_2$ that is the closest to $\rho_1$ among all rays generated by $\deg(y_i)$. 
Consider the subcone $\omega'$ generated by $\rho_1$ and $\rho_2$
and note that
$\pi^{-1}(\pi(\rho))= \pi^{-1}(\rho_1) \subseteq C(R_{\omega'}, \nu)$
holds.

The subalgebra
$R_{\omega'}$ is finitely generated by homogeneous elements 
$$
x_1, \ldots, x_t, y_1, z_1, \ldots, z_s \quad 
\text{with} 
\quad 
z_j = \prod_{i=1}^t  x_{i}^{a_{ji}} \cdot \prod_{k=1}^r y_k^{b_{jk}}
$$
for some $a_{ji},b_{jk}\in\ZZ_{\geq 0}$.
Furthermore, we may find rational numbers
$\mu_{j} \in \QQ_{\geq 0}$ such that
$\deg(z_j) - \sum_i \mu_{j}a_{ji} \deg(x_{i}) \in \rho_2$ and
$0\leq \mu_{j}\leq 1 $ holds.

Let $\alpha\in \NN$ be such that $\alpha\cdot \mu_j\in \ZZ_{\geq 0}$.
Consider the subalgebra $R'$ of $R_{\sigma'}$ generated by the elements
$ 
x_{1}, \ldots, x_{t}, y_1$ and
\[\hat z_j:=z_j^\alpha /  (\prod_{i=1}^t x_{i}^{\mu_{j}a_{ji} \cdot \alpha})
=\prod_{i=1}^t  x_{i}^{\alpha(1-\mu_j)a_{ji}} \cdot \prod_{k=1}^r  y_k^{\alpha b_{jk}}
 \]
for $j =1, \ldots, s$. The degree of each $\hat z_j$ lies on $\rho_2$, so $R'$ is generated by elements whose degrees lie on the rays of $\omega'$. Moreover,
 $R'$ contains the $\alpha$th Veronese subalgebra of $R_\omega'$, so $C(R', \nu) = C(R_{\omega'}, \nu)$.
Thus, replacing $R$ by $R'$ and $\omega$ by $\omega'$, we may assume that $R$ is generated by elements whose degrees lie on the rays of $\omega$.

In this situation, we may apply Lemma \ref{lem:raygen} and Proposition \ref{prop:norm}  to conclude that $C(R_{\pi(\rho)}, \nu)=C(\bar R_{\pi(\rho)},\nu)$. Hence, we may replace $R$ by $\bar R$, that is, we may assume that $R$ is normal. But by \cite[Theorem 1.3]{poised}
$R$ is homogeneously Khovanskii-finite.
In particular $\pi^{-1}(\pi(\rho)) = C(R_{\pi(\rho)}, \nu)$ holds
for every ray $\rho \preceq C(R, \nu)$.
\end{proof}

\begin{remark}
It is straightforward to adapt Theorem \ref{thm:almtorkhofin} by replacing the assumption that $R$ is rational of complexity one with the assumption that (the extension of) $\nu$ is Khovanskii-finite with respect to $\bar R$.
\end{remark}

\begin{rem}
	After this manuscript first appeared on the arXiv, Kaveh, Manon, and Murata proved that every projective variety admits a degeneration to a not-necessarily normal variety with a faithful action by a codimension-one torus \cite{degen}. In light of this, Theorem \ref{thm:almtorkhofin} and the question of when varieties with codimension-one torus action admit toric degenerations is all the more relevant.
\end{rem}

In the remainder of this section, we provide two characterizations of homogeneous Khovanskii-finiteness for almost toric varieties. The first characterization has to do with GIT:

Let $R$ be a finitely generated  $M$-graded $\KK$-domain
with $R_0 = \KK$. Then the torus
$H := \Spec(\KK[M])$ acts on the affine variety $X:= \Spec(R)$.
For any weight $w \in \omega(R)$ the 
\emph{set of semistable points with respect to $w$} is
the $H$-invariant open subset
$$X^{ss}(w) := \left\{x \in X\ |\ \ f(x) \neq 0 \text{ for some } f \in R_{nw}, n >0\right\} \subseteq X,$$
and the $H$-action on $X^{ss}(w)$ admits a good quotient
$X^{ss}(w) \rightarrow X^{ss}(w) /\!\!/ H =: Y(w)$.
Moreover
$$Y(w) = \Proj(R(w))=\Proj\left(\bigoplus_{k \geq 0} R_{kw}\right)$$
holds and $Y(w)$ is a projective variety. 
Note that there are only finitely many different
varieties $Y(w)$; see e.g. \cite[Section 3.1]{coxbook} for details.

\begin{corollary}
Let $X$ be an affine almost toric variety with
coordinate ring $R$ such that $R_0 = \KK$ holds, and assume that $\KK$ is uncountable and $\Char(\KK)=0$.
Then $X$ is homogeneously Khovanskii-finite 
if and only if the GIT-quotients $Y(w)$, where $w \in \omega(R)$, are 
all smooth.

\begin{proof}
Let $\nu$ be any homogeneous full rank valuation on $R$.
Then $\nu$ is fully homogeneous 
and we denote the projection 
$C(R, \nu) \rightarrow M_\RR$
with $\pi$, 
where $C(R, \nu)$ is rational polyhedral
due to Theorem \ref{thm:almtorkhofin}.
For each ray $\rho \preceq C(R, \nu)$
we have
$R_{\pi(\rho)} = R(w)$
for some $w \in \omega(R)$. Theorem~\ref{thm:almtorkhofin} and Theorem \ref{thm:khofin} imply that $\nu$ is Khovanskii-finite with respect to $R$ if and only if it is Khovanskii-finite with respect to $R(w)$ for each $w$. But by Corollary \ref{cor:thm3} and Remark \ref{rem:cor}, $R(w)$ is homogeneously Khovanskii-finite if and only if $Y(w)$ is smooth.
\end{proof}
\end{corollary}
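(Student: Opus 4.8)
The plan is to show this is a direct application of the machinery built up in the preceding results, with the GIT-quotient language serving as a geometric repackaging of the Veronese-subalgebra conditions. The key realization is that the varieties $Y(w) = \Proj(R(w))$ are precisely the projective curves (since $R$ has complexity one, so $R(w)$ is $\ZZ$-graded with $\Proj$ a rational curve) that govern Khovanskii-finiteness along each ray of the valuation cone.

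First I would take an arbitrary homogeneous full rank valuation $\nu$ on $R$ and observe that, since $R$ is assumed to be an almost toric (hence complexity-one rational) coordinate ring, $\nu$ is automatically fully homogeneous and $C(R,\nu)$ is rational polyhedral by Theorem \ref{thm:almtorkhofin}. With $\pi$ the projection to $M_\RR$, the crucial combinatorial fact is that for each ray $\rho \preceq C(R,\nu)$, the Veronese subalgebra $R_{\pi(\rho)}$ equals $R(w)$ for the weight $w$ generating the ray $\pi(\rho)$ in $\omega(R)$, so that $\Proj(R_{\pi(\rho)}) = Y(w)$. Then the equivalence from Theorem \ref{thm:almtorkhofin} together with Theorem \ref{thm:khofin} lets me reduce Khovanskii-finiteness of $\nu$ with respect to $R$ to Khovanskii-finiteness with respect to each $R(w)$ individually.

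Next I would invoke Corollary \ref{cor:thm3} (via Remark \ref{rem:cor}, to handle the fact that $R(w)$ need not be generated in degree one) to translate the ring-theoretic condition into geometry: $R(w)$ is homogeneously Khovanskii-finite if and only if the rational curve $Y(w)$ is smooth. The hypotheses that $\KK$ is uncountable and $\Char(\KK)=0$ are exactly what Corollary \ref{cor:thm3} requires, and they transfer unchanged. To assemble the final biconditional, I would argue that $X = \Spec R$ is homogeneously Khovanskii-finite (every $\nu$ is Khovanskii-finite) precisely when, for every $\nu$ and every ray of $C(R,\nu)$, the corresponding $R(w)$ is Khovanskii-finite with respect to $\nu$ --- and since every $w \in \omega(R)$ arises as such a $\pi(\rho)$ for a suitable choice of $\nu$, this is equivalent to demanding that \emph{all} the $R(w)$, $w \in \omega(R)$, be homogeneously Khovanskii-finite, i.e. that all the $Y(w)$ be smooth.

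The main obstacle I anticipate is the bookkeeping in the "every $w$ arises" direction: I must ensure that the finitely many GIT-quotients $Y(w)$ indexed by $\omega(R)$ are genuinely exhausted by the rays $\pi(\rho)$ as $\nu$ ranges over all homogeneous full rank valuations, so that smoothness of \emph{every} $Y(w)$ is both necessary and sufficient rather than merely necessary. Since there are only finitely many distinct $Y(w)$, and since for any fixed ray $\rho_0$ of $\omega(R)$ one can choose a valuation whose valuation cone has a ray projecting onto $\rho_0$, this exhaustion should follow cleanly; the subtlety is purely in confirming that the smoothness criterion is applied to the right (complexity-zero, i.e. genuinely curve) quotients, which is guaranteed because each $R(w)$ has $\Proj$ a rational curve.
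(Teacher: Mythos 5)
Your proposal is correct and follows essentially the same route as the paper's proof: reduce via Theorems \ref{thm:almtorkhofin} and \ref{thm:khofin} to Khovanskii-finiteness of the Veronese subalgebras $R(w)$, then translate to smoothness of $Y(w)$ via Corollary \ref{cor:thm3} and Remark \ref{rem:cor}. The exhaustion issue you flag is legitimate but is resolved without varying $\nu$: condition (2) of Theorem \ref{thm:khofin} applies to \emph{all} rational rays of $C(R,\nu)$, and these project onto all rational rays of $\omega(R)$, so every $R(w)$ is reached for each fixed valuation.
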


The results of \cite{poised} show that \emph{normal} almost toric varieties are homogeneously Khovanskii-finite. The following characterization shows that after taking appropriate Veronese subalgebras, these are in fact the \emph{only} homogeneously Khovanskii-finite almost toric varieties:

\begin{theorem}\label{thm:khofincharacterization}
	Assume that $\KK$ is uncountable and $\Char(\KK)=0$.
An affine almost toric variety $X=\Spec R$ with $R_0 = \KK$ is homogeneously Khovanskii-finite if and only if there exists some $\lambda\in \NN$ such that the Veronese subring
\[
\bigoplus_{u\in M} R_{\lambda\cdot u}
\]
is normal.
\end{theorem}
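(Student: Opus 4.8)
The plan is to prove both directions using the reductions already established, with the heart of the argument being the ray-by-ray criterion from Theorem \ref{thm:almtorkhofin} combined with the curve results of \S\ref{sec:curves}. For the \emph{if} direction, suppose the Veronese subring $R(\lambda M)=\bigoplus_u R_{\lambda u}$ is normal for some $\lambda\in\NN$. Since passing to a Veronese subalgebra changes neither the value semigroups nor (by Proposition \ref{prop:norm}) the valuation cones of homogeneous valuations, I would argue that $R$ is homogeneously Khovanskii-finite if and only if $R(\lambda M)$ is. But $R(\lambda M)$ is a \emph{normal} almost toric variety, and by \cite[Theorem 1.3]{poised} every normal almost toric variety is homogeneously Khovanskii-finite. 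The only subtlety here is verifying that Khovanskii-finiteness of $R$ with respect to $\nu$ is genuinely equivalent to that of $R(\lambda M)$; this should follow from Theorem \ref{thm:almtorkhofin} since the rays of $C(R,\nu)$ and $C(R(\lambda M),\nu)$ project to the same rays of $\omega$ up to scaling, so the relevant Veronese subalgebras $R_{\pi(\rho)}$ coincide after passing to a further Veronese.

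For the \emph{only if} direction, I would argue contrapositively: assume that \emph{no} Veronese subring $R(\lambda M)$ is normal, and produce a full rank homogeneous valuation that is not Khovanskii-finite. The strategy is to use Theorem \ref{thm:almtorkhofin}, which reduces Khovanskii-finiteness of $\nu$ on $R$ to Khovanskii-finiteness on each Veronese subalgebra $R_{\pi(\rho)}$ for $\rho$ a ray of $C(R,\nu)$. Each such $R_{\pi(\rho)}$ is (a Veronese of) the projective coordinate ring of a rational curve $Y_w=\Proj R(w)$ for an appropriate $w\in\omega(R)$ lying on the ray $\pi(\rho)$. By Corollary \ref{cor:thm3} together with Remark \ref{rem:cor}, $R_{\pi(\rho)}$ is homogeneously Khovanskii-finite precisely when $Y_w$ is smooth. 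The key geometric input is that normality of some Veronese $R(\lambda M)$ is equivalent to smoothness of \emph{all} the GIT quotients $Y(w)$: indeed, for a complexity-one variety the non-normal locus corresponds exactly to the singular fibers/quotients, and taking the $\lambda$-th Veronese with $\lambda$ divisible enough resolves the rounding in the p-divisor description precisely when each $\lfloor\D(u)\rfloor$ already equals $\D(u)$ on the relevant rays, i.e.\ when each $Y(w)$ is smooth.

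So the argument would run: if no Veronese $R(\lambda M)$ is normal, then there is some $w\in\omega(R)$ for which $Y(w)$ is singular for every multiple, hence (by the curve theory) $R(w)\cong R(L)$ for a linear system $L$ of positive arithmetic genus. Applying the second half of Theorem \ref{thm:genuskhofincurves}, using that $\KK$ is uncountable and $\Char(\KK)=0$, only countably many valuations $\nu_Q$ are Khovanskii-finite with respect to $R(w)$, so one may choose $Q$ (equivalently, a restriction of $\nu$ to the function field $\KK(t)$ of the one-dimensional quotient) for which $\nu_Q$ fails to be Khovanskii-finite on $R(w)$. Extending this choice to a full rank homogeneous valuation $\nu$ on all of $R$ — which is possible because the restriction to $\KK(t)$ determines the relevant behavior on each ray, as in Example \ref{ex:at} — yields a valuation that is not Khovanskii-finite on the corresponding $R_{\pi(\rho)}$, hence by Theorem \ref{thm:almtorkhofin} not Khovanskii-finite on $R$. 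This contradicts homogeneous Khovanskii-finiteness of $R$.

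The main obstacle I anticipate is making precise the equivalence between normality of some Veronese $R(\lambda M)$ and smoothness of all the GIT quotients $Y(w)$, and simultaneously arranging that a ``bad'' valuation on a single singular quotient $Y(w)$ extends to a genuine full rank homogeneous valuation of $R$ realizing $\nu_Q$ on the correct ray. The curve-by-curve non-finiteness is clean, but I must ensure the chosen restriction to $\KK(t)$ is compatible across all rays simultaneously (one cannot independently sabotage each ray), and that the single offending ray suffices to break global Khovanskii-finiteness — this is exactly where Theorem \ref{thm:almtorkhofin}'s ray-by-ray criterion does the work, since failure on one projected ray is enough. Carefully tracking the normalization via Proposition \ref{prop:norm} and the p-divisor description (as illustrated in Example \ref{ex:at}) should close this gap.
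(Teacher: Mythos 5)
Your ``if'' direction is essentially the paper's argument: reduce to the normal Veronese subring $R(\lambda M)$, apply \cite[Theorem 1.3]{poised}, and transfer Khovanskii-finiteness back to $R$. One caveat: passing to a Veronese does change the value semigroup (it becomes a proper sub-semigroup), so the transfer is not ``the value semigroups coincide.'' What actually closes this step is Lemma \ref{lem:fingen}: since every homogeneous $f\in R_u$ has $f^\lambda\in R_{\lambda u}$, the cones generated by $S(R,\nu)$ and $S(R(\lambda M),\nu)$ agree, the latter is closed and rational polyhedral by Khovanskii-finiteness of $R(\lambda M)$, hence so is the former, and Lemma \ref{lem:fingen} gives finite generation of $S(R,\nu)$. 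This is a small repair, not a gap.

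The ``only if'' direction has a genuine gap. You set it up contrapositively, which forces you to prove the implication: \emph{if all GIT quotients $Y(w)$ are smooth (equivalently, all the ray Veronese subalgebras have arithmetic genus $0$), then some multigraded Veronese $R(\lambda M)$ is normal.} You flag this as ``the key geometric input'' but your justification --- that taking $\lambda$ divisible enough ``resolves the rounding in the p-divisor description'' --- only addresses normality of Veronese subrings of $\bar R$, which are automatically normal anyway. The actual content is showing $R(\lambda M)=\bar R(\lambda M)$, i.e.\ that the non-normal ring $R$ catches up to its normalization in all degrees of $\lambda M$, not just on the rays. The paper does this by: (i) choosing a smooth subdivision $\Sigma$ of $\omega(R)$ on whose cones the p-divisor $\D$ is linear; (ii) using Theorem \ref{thm:genuskhofincurves} to get $R_\rho(\lambda_\rho\ZZ)=\bar R_\rho(\lambda_\rho\ZZ)$ on each ray $\rho\in\Sigma(1)$; and (iii) --- the step missing from your sketch --- propagating this equality to interior degrees $u=u_1+\cdots+u_r\in\sigma\cap\lambda M$ via linearity of $\D$ on $\sigma$ and the surjectivity of $H^0(\PP^1,\CO(D_1))\times H^0(\PP^1,\CO(D_2))\to H^0(\PP^1,\CO(D_1+D_2))$ for semiample divisors on $\PP^1$, giving $\bar R_u=R_{u_1}\cdots R_{u_r}=R_u$. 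Without (iii) the equivalence you rely on is unproven, and your contrapositive cannot get started. The remaining pieces of your ``only if'' argument (choosing a bad $Q$ on a positive-genus quotient using uncountability, realizing $\nu_Q$ as the restriction of a full rank homogeneous valuation, and concluding via the ray-by-ray criterion of Theorems \ref{thm:khofin}/\ref{thm:almtorkhofin}) are sound and match how the paper deduces genus $0$ on each ray; note the paper runs this direction directly rather than by contradiction, which lets it avoid the extension-of-valuations bookkeeping you worry about at the end.
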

\begin{proof}
Let $\lambda \in \ZZ_{\geq 0}$ be any positive integer such that
$R' := \bigoplus_{u \in M} R_{\lambda \cdot u}$ is a normal almost toric variety.
Then due to \cite[Theorem 1.3]{poised} $R'$ is homogeneously Khovanskii-finite
and thus $S(R', \nu)$ is finitely generated for every homogeneous valuation $\nu$.
Due to Lemma \ref{lem:fingen} this implies that $\cone(S(R', \nu)) =C(R', \nu) = C(R, \nu)$ is rational polyhedral 
and for each ray $\rho \preceq C(R', \nu)$ there exists a homogeneous element
$f_u \in R'$ such that $\nu(f_u)$ generates $\rho$. As $R' \subseteq R$
we conclude
$\cone(S(R, \nu)) = C(R, \nu)$ and applying Lemma \ref{lem:fingen}
once more the assertion follows.

Assume instead that
$R$ is homogeneously Khovanskii-finite and let $\bar R$ be its normalization.
As discussed in \S\ref{sec:global}, $\bar R$ has the form
\[
\bar R=\bigoplus_{u\in \omega(R)\cap M} H^0(\PP^1,\CO(\lfloor(\D(u)\rfloor)))\cdot \chi^u
\]
for some p-divisor $\D$.
Consider any smooth subdivision $\Sigma$ of the weight cone of $\omega(R) = \omega(\bar{R})$
such that the map 
$$\D \colon \omega(\bar{R}) \rightarrow \WDIV_\RR(\PP^1), \quad u \mapsto \D(u)$$
is linear on each of the cones $\sigma \in \Sigma$. 
As $R$ is homogeneously Khovanskii-finite 
Theorem \ref{thm:khofin} implies that
for each ray $\rho \in \Sigma(1)$ 
the Veronese subalgebra
$R_\rho$ is homogeneously Khovanskii-finite
and due to Theorem \ref{thm:genuskhofincurves} 
there exists
an integer $\lambda_{\rho} \in \NN$ such that
$R_{\rho}(\lambda_{\rho}\cdot \ZZ)$
is normal. Set 
$\lambda := \mu \prod_{\rho \in \Sigma(1)} \lambda_\rho$
with $\mu \geq 1$ such that
$\D(u) \in \WDIV_\ZZ(\PP^1)$ for each
$u \in \rho \cap \lambda M$, where $\rho \in \Sigma(1)$.
We claim that 
$$
R(\lambda M)=
\bar{R}(\lambda M) 
=
\bigoplus_{u\in \lambda M\cap\omega} H^0(\PP^1, \CO(\D(u)))
$$
and thus $R(\lambda M)$ is normal.
Note that by the choice of $\lambda$ we have
$R_u = \bar{R}_u$ for all $u \in \rho\cap \lambda M$ with $\rho \in \Sigma(1)$.
Consider any $u \in \sigma \cap \lambda M$ with $\sigma \in \Sigma$.
Then there exists a unique decomposition
$u = u_1 + \ldots  + u_r$, where $u_i \in \lambda M$ are 
generators of the rays of $\sigma$
and as $\D$ is linear on $\sigma$
we obtain
$\D(u) = \D(u_1)+ \ldots + \D(u_r) \in \WDIV_{\ZZ}(\PP^1)$.
Since the map
$$H^0(\PP^1, \CO_{\PP^1}(D_1)) \times H^0(\PP^1, \CO_{\PP^1}(D_2)) \rightarrow H^0(\PP^1, \CO_{\PP^1}(D_1+D_2))
$$ 
is surjective for any two divisors semiample divisors $D_1, D_2 \in \WDIV_\ZZ(\PP^1)$
we conclude $\bar R_u=R_{u_1}\cdot R_{u_2}\cdots R_{u_r}=R_u$ for 
each $u \in \lambda M$ and the assertion follows.
\end{proof}

\begin{remark}\label{rem:projVSA}
Let $R$ be a $\ZZ$-graded domain with $R_0=\KK$. Then there exists some $\lambda\in \NN$ such that $R(\lambda\ZZ)$ is normal if and only if the projective variety $\Proj(R)$ is normal. Indeed, if $R(\lambda\ZZ)$ is normal, then so is $\Proj(R)=\Proj(R(\lambda\ZZ))$. The converse follows by e.g.~\cite[II Ex 5.14(c)]{hartshorne}.

We apply this in our setting as follows: let $X$ be the affine cone over a projective almost toric variety $Y$. Then Theorem \ref{thm:khofincharacterization} coupled with this remark imply that $X$ is homogeneously Khovanskii-finite if and only if $Y$ is normal.
\end{remark}

\begin{remark}
	It would be interesting and useful to have an effective algorithm for determining if an affine almost toric variety $X$ admits a full rank Khovanskii-finite valuation. Using Theorem \ref{thm:almtorkhofin} together with a description of all possible valuation cones from \cite{poised}, it is possible to reduce to the case of an almost toric surface. Solving Problem \ref{prob:effective} would then deliver such an algorithm.
\end{remark}

\bibliographystyle{alpha}
\bibliography{driver}

\end{document}